\theoremstyle{plain}		\newtheorem{theorem}{Theorem}[section]
				\newtheorem{proposition}[theorem]{Proposition}
				\newtheorem{lemma}[theorem]{Lemma}
				\newtheorem{corollary}[theorem]{Corollary}
                \newtheorem{problem}[theorem]{Problem}
\numberwithin{equation}{section}
\newcommand*{\bR}{\ensuremath{\mathbb{R}}}
\newcommand*{\bC}{\ensuremath{\mathbb{C}}}
\newcommand*{\loc}{\mathrm{loc}}
\newcommand*{\closure}[1]{\overline{#1}}
\newcommand*{\bdary}[1]{\partial #1}
\newcommand*{\Wert}{\mathord{\mbox{|\kern-1.5pt|\kern-1.5pt|}}}
\newcommand*{\ie}{\mbox{i.e.}\xspace}
\DeclareMathOperator{\dist}{dist}
\DeclareMathOperator{\diam}{diam}
\DeclareMathOperator{\modulus}{mod}
\def\XXint#1#2#3{{\setbox0=\hbox{$#1{#2#3}{\int}$}
  \vcenter{\hbox{$#2#3$}}\kern-.5\wd0}}
\title[Generalized quasidisks and conformality II]{Generalized quasidisks and conformality II}
\author{Changyu Guo}
\address[Changyu Guo]{Department of Mathematics and Statistics, University of Jyv\"askyl\"a, P.O. Box 35, FI-40014 University of Jyv\"askyl\"a, Finland}
\email{changyu.c.guo@jyu.fi}
\subjclass[2000]{30C62,30C65}
\keywords{homeomorphism of finite distortion, generalized quasidisk, local connectivity, three point property}
\thanks{C.Y.Guo was partially supported by the Academy of Finland grant 131477.}
\begin{document}
\begin{abstract}
We introduce a weaker variant of the concept of three point property, which is equivalent to a non-linear local connectivity condition introduced in~\cite{gkt13}, sufficient to guarantee
the extendability of a conformal map $f:\mathbb D\to\Omega$  to the entire plane as a
homeomorphism of locally exponentially integrable distortion. Sufficient conditions for extendability to a homeomorphism of locally $p$-integrable distortion are also given.
\end{abstract}

\maketitle

\section{Introduction}\label{sec:first}

One calls a Jordan domain $\Omega\subset\bR^2$ a quasidisk if it is
the image of the unit disk $\mathbb D$ under a quasiconformal mapping
$f\colon\bR^2\to\bR^2$ of the entire plane.
If $f$ is $K$-quasiconformal, we say that $\Omega$ is a $K$-quasidisk.
Another possibility is to require that $f$ is additionally conformal in
the unit disk $\mathbb D$. It is essentially due to K\"uhnau~\cite{k88} that
$\Omega$ is a $K$-quasidisk if and only if $\Omega$ is the image of $\mathbb D$
under a $K^2$-quasiconformal mapping $f:\bR^2\to \bR^2$ that is conformal
in $\mathbb D$, see~\cite{gh01}. The concept of a quasidisk is central in the theory of planar quasiconformal
mappings; see, for example, \cite{a06,aim09,g82,lv73}.

A substantial part of the theory of quasiconformal mappings has recently
been shown to extend in a natural form to the setting of mappings of
locally exponentially integrable
distortion~\cite{agrs10,aim09,d88,hek,hk03,iko02,oz05,z08}.
See Section 2 below for the definition of this class of mappings.
Thus one could say that $\Omega\subset\bR^2$ is a generalized quasidisk if it is
the image of the unit disk $\mathbb D$ under a homeomorphism $f\colon\bR^2\to\bR^2$ of the entire plane with locally exponentially integrable distortion. However, requiring that $f$ is additionally conformal in the unit disk $\mathbb D$ leads to different classes of domains, see~\cite[Theorem 1.1]{gkt13}. In this paper, a Jordan domain $\Omega\subset\bR^2$ is termed a generalized quasidisk if the additional conformality requirement is satisfied.

For a quasidisk $\Omega\subset\bR^2$, there are several equivalent characterizations. One of the simplest is Ahlfors~\cite{a63} three point property. Recall that a Jordan domain $\Omega\subset\bR^2$ has the three point property if there exists a constant $C\geq 1$ such that for each pair of distinct points $P_1,P_2\in\bdary\Omega$,
\begin{equation}\label{eq:three point property}
\min_{i=1,2}\diam(\gamma_i)\leq C|P_1-P_2|,
\end{equation}
where $\gamma_1,\gamma_2$ are components of $\bdary\Omega\backslash\{P_1,P_2\}$. In order to understand the geometry of generalized quasidisks, one naturally has to weaken the three point property. A Jordan domain $\Omega\subset\bR^2$ is said to have the three point property with a control function $\psi$ if there exists a constant $C\geq 1$ and an increasing function $\psi:[0,\infty)\to [0,\infty)$ such that for each pair of distinct points $P_1,P_2\in\bdary\Omega$, 
\begin{equation}\label{eq:three point property with control function}
\min_{i=1,2}\diam(\gamma_i)\leq \psi\Big(C|P_1-P_2|\Big).
\end{equation}
A closely related concept is the following $\psi$-local connectivity, which was introduced in~\cite{gkt13}. A domain  $\Omega\subset\bR^2$ is said to be $\psi$-locally connected if for each $x$ and all $r>0,$
\begin{itemize}
\item each pair of points in $B(x,r)\cap\Omega$ can be joined by an arc in
$B(x,\psi^{-1}(r))\cap\Omega$, and
\item each pair of points in $\Omega\backslash B(x,r)$ can be joined by an arc in
$\Omega\setminus B(x,\psi(r))$.
\end{itemize}
If we were to choose $\psi(t)=Ct,$ then this would reduce to the usual linear local
connectivity condition.
In Lemma~\ref{lemma:equivalence of three point property and LLC} below, we show that
that a Jordan domain $\Omega\subset\bR^2$ has the three point property with a control function $\psi$ if and only if $\Omega$ is $\psi^{-1}$-locally connected.

In~\cite[Theorem 1.2]{gkt13}, it was proved that if a Jordan domain $\Omega\subset\bR^2$ is $\psi$-locally connected with $\psi(t)=\frac{Ct}{\log^s\log\frac{1}{t}}$ for some positive constant $C$ and some $s\in (0,\frac{1}{4})$, then $\Omega$ is a generalized quasidisk. However, the result is not sharp regarding well-studied examples, see~\cite{gkt13}. In fact, the previous studies in~\cite{gkt13,kt07,kt10,t07} suggest that the critical case should be $\psi(t)=\frac{Ct}{\log\frac{1}{t}}$. Our first main result is the following generalization of~\cite[Theorem 1.2]{gkt13}.

\begin{theorem}\label{thm:thma}
If a Jordan domain $\Omega\subset\bR^2$ has the three point property with the control function $\psi(t)=Ct\log^{\frac{1}{2}}\frac{1}{t}$ for some positive constant $C$, then $\Omega$ is a generalized quasidisk.
\end{theorem}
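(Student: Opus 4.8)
The plan is to argue by conformal welding: extend the Riemann map $\phi\colon\mathbb D\to\Omega$ by gluing it across $S^{1}$ to a controlled extension of the welding homeomorphism into the exterior disk. Since $\bdary\Omega$ is a Jordan curve, $\Omega$ and its exterior $\Omega^{*}:=(\bR^{2}\cup\{\infty\})\setminus\closure{\Omega}$ are Jordan domains on the sphere with $\infty\in\Omega^{*}$, so by Carath\'eodory's theorem the Riemann map $\phi$ and a conformal map $\phi^{*}$ from the exterior of $\closure{\mathbb D}$ onto $\Omega^{*}$ fixing $\infty$ both extend to homeomorphisms of the closures; hence $h:=(\phi^{*})^{-1}\circ\phi\colon S^{1}\to S^{1}$ is an orientation-preserving circle homeomorphism. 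It suffices to construct an orientation-preserving homeomorphism $H$ of $\bR^{2}\setminus\mathbb D$ onto itself with $H|_{S^{1}}=h$, with $H=\id$ for $|z|\ge2$, of class $W^{1,1}_{\loc}$, and with distortion satisfying $K_{H}(z)\le\Lambda(\dist(z,S^{1}))$ for some nondecreasing $\Lambda\colon(0,\infty)\to[1,\infty)$ with $\sum_{j\ge0}2^{-j}e^{\lambda\Lambda(2^{-j})}<\infty$ for some $\lambda>0$. Indeed, then $f:=\phi$ on $\closure{\mathbb D}$ and $f:=\phi^{*}\circ H$ on $\bR^{2}\setminus\mathbb D$ is a homeomorphism of $\bR^{2}$, conformal in $\mathbb D$, with $f(\mathbb D)=\Omega$; since $\phi^{*}$ is conformal, $K_{f}=K_{H}$ off $\mathbb D$ and $K_{f}\equiv1$ on $\mathbb D$, and because $\{z:2^{-j-1}\le\dist(z,S^{1})\le2^{-j}\}$ has area comparable to $2^{-j}$, the summability condition yields $e^{\lambda K_{f}}\in L^{1}_{\loc}(\bR^{2})$; the Sobolev regularity of $f$ reduces to that of $H$ by a routine estimate. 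So it remains to build $H$.

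\textbf{Step 2 (control of the welding).} By Lemma~\ref{lemma:equivalence of three point property and LLC} the hypothesis says exactly that $\Omega$ is $\psi^{-1}$-locally connected with $\psi(t)=Ct\log^{\frac{1}{2}}\frac{1}{t}$. From this I would extract dyadic-scale control of $h$ and $h^{-1}$: a nondecreasing $\Lambda$ as in Step 1 such that
\[
\Lambda(t)^{-1}\le\frac{|h(I)|}{|h(I')|}\le\Lambda(t)
\]
for all adjacent arcs $I,I'\subset S^{1}$ of common length $t$, and likewise for $h^{-1}$. This is where the magnitude of $\psi$ enters: feeding the $\psi^{-1}$-local connectivity of $\Omega$, and the companion property of $\Omega^{*}$, into the standard estimates for the conformal modulus of ring domains --- equivalently, for harmonic measure --- controls, at scale $t$, the diameters $|\phi(I)|$ against $|\phi(I')|$ along $\bdary\Omega$ as well as the $S^{1}$-lengths of the $\phi^{*}$-preimages of these arcs, and the factor $\log^{\frac{1}{2}}\frac{1}{t}$ makes $\Lambda$ grow slowly enough that $\sum_{j}2^{-j}e^{\lambda\Lambda(2^{-j})}<\infty$ for some $\lambda>0$ (in fact for every $\lambda>0$).

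\textbf{Step 3 (the extension).} Parametrise $\bR^{2}\setminus\mathbb D$ by $z=(1+s)e^{i\theta}$, $s\ge0$, and let $\eta\colon\bR\to\bR$ be the increasing lift of $h$. On $0\le s\le1$ I would put $H(z)=(1+s)e^{i\Theta(\theta,s)}$, where $\Theta(\cdot,s)$ is a suitable average of $\eta$ at scale $s$ (a Beurling--Ahlfors-type extension) chosen so that $\Theta(\cdot,0)=\eta$, so that $\theta\mapsto\Theta(\theta,s)$ is an increasing circle map for each $s$, and so that $\Theta(\cdot,1)=\id$; and I would set $H=\id$ for $|z|\ge2$. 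Then $H$ is an orientation-preserving homeomorphism of $\bR^{2}\setminus\mathbb D$, smooth off $S^{1}$, and a direct computation --- organised over the Whitney-type boxes $Q_{j,k}$ at height $s\sim2^{-j}$ above the dyadic arcs $I_{j,k}\subset S^{1}$ --- bounds the distortion of $H$ on $Q_{j,k}$ by a constant times $\sup\{|h(I)|/|h(I')|,\ |h^{-1}(J)|/|h^{-1}(J')|\}$ over adjacent arcs of length $\sim2^{-j}$; with Step 2 this gives $K_{H}(z)\le C\Lambda(\dist(z,S^{1}))$, while $H\in W^{1,1}_{\loc}$ is checked by summing $\int_{Q_{j,k}}|DH|$, using $\sum_{k}|h(I_{j,k})|=2\pi$. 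Together with Step 1 this completes the proof.

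\textbf{The main obstacle.} I expect Step 2 to be the hard part: turning the three point property into quantitative control of the welding $h$ \emph{and of its inverse} at every dyadic scale requires delicate conformal modulus / harmonic measure estimates on both $\Omega$ and its complement, and the exponent $\frac{1}{2}$ in $\psi(t)=Ct\log^{\frac{1}{2}}\frac{1}{t}$ has to be exactly what keeps the distortion $\Lambda$ of the extension exponentially integrable. This is the mechanism behind the improvement over \cite[Theorem 1.2]{gkt13}, whose slower-growing control function kept it well short of the conjecturally sharp threshold (which, in the $\psi$-local-connectivity formulation, is $\psi(t)=Ct/\log\frac{1}{t}$).
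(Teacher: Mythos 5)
Your overall architecture coincides with the paper's: reduce to extending the welding $h=(\phi^{*})^{-1}\circ\phi$, bound its scalewise (adjacent-arc) distortion at scale $t$, extend by a Beurling--Ahlfors-type construction whose pointwise distortion is controlled by that quantity, and integrate over dyadic annuli. Steps 1 and 3 are essentially the paper's Section 4 (Proposition~\ref{prop:results for conformal welding} and Corollary~\ref{coro:sufficient for extending}), and your summability criterion is the right one. The problem is that Step 2 --- which you yourself identify as the hard part --- is not an argument but an announcement: ``feeding the $\psi^{-1}$-local connectivity into the standard estimates for the conformal modulus of ring domains controls the diameters.'' That sentence is the theorem. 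In particular nothing in your sketch explains why the exponent $\frac12$ in $\log^{\frac12}\frac1t$ is exactly the right one, and without that the proof does not close.

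The paper's mechanism for Step 2 is worth recording, because it is where the $\frac12$ comes from. Given adjacent arcs on $S^{1}$ of length $t$, their $\phi$-images $\alpha_{1},\alpha_{2}$ (suitably completed to a four-arc decomposition of $\bdary\Omega$) span an interior curve family of modulus $\le C$ by conformal invariance. Lemma~\ref{lemma:key modulus estimate} then applies the three point property \emph{twice} to deduce $\min\{\diam\alpha_{1},\diam\alpha_{2}\}\le\psi\circ\psi(d(\alpha_{1},\alpha_{2}))$, hence an upper bound $\lesssim\log^{-1}\bigl(1+\psi^{-1}\circ\psi^{-1}(\diam\alpha_{1})/\diam\alpha_{1}\bigr)$ for the \emph{exterior} modulus; conformal invariance on the exterior side converts this into $\delta_{G}(\theta,t)\lesssim\diam(\alpha_{1})/\psi^{-1}\circ\psi^{-1}(\diam\alpha_{1})$, and the H\"older-$\frac12$ modulus of continuity of $f^{-1}$ (Lemma~\ref{lemma:modulus of continuity} with $K=1$) together with monotonicity of $s\mapsto s/\psi^{-1}\circ\psi^{-1}(s)$ gives $\delta_{G}(\theta,t)\lesssim t^{2}/\psi^{-1}\circ\psi^{-1}(t^{2})$. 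Each application of $\psi^{-1}$ costs a factor $\log^{\frac12}$, so two of them turn $\log^{\frac12}$ into $\log$, and $\delta_{G}(\theta,t)\lesssim\log\frac1t$, which is precisely the threshold in Corollary~\ref{coro:sufficient for extending}. Your plan as stated does not anticipate either the double composition $\psi\circ\psi$ or the $t\mapsto t^{2}$ loss, both of which are essential to the bookkeeping. A small further correction: with $\Lambda(2^{-j})\approx Aj$ the sum $\sum_{j}2^{-j}e^{\lambda\Lambda(2^{-j})}$ converges only for $\lambda<(\log2)/A$, not ``for every $\lambda>0$'' as you claim; this is harmless since local exponential integrability only requires some $\lambda>0$, but the parenthetical is false.
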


Equivalently, Theorem~\ref{thm:thma} provides a general sufficient condition for extendability of a conformal mapping $f:\mathbb{D}\to \Omega$ to a homeomorphism of locally exponentially integrable distortion. It was pointed out in~\cite{gkt13} that this is essentially equivalent to extending the corresponding conformal welding to the whole plane as a homeomorphism of locally exponentially integrable distortion, see also Section 4 below.

Our second main result asserts that if we relax the control function $\psi$ to be a root in Theorem~\ref{thm:thma}, then we end up with a homeomorphism of the whole plane with locally $p$-integrable distortion.

\begin{theorem}\label{thm:thmb}
Let $\Omega\subset\bR^2$ be a Jordan domain that has the three point property with the control function $\psi(t)=t^{s}$, $0<s<1$.
Then any conformal mapping $f\colon \mathbb D\to\Omega$  can be extended to the entire plane as a homeomorphism of locally $p$-integrable distortion for all $p\in (0,\frac{s^2}{2(1-s^2)})$.
\end{theorem}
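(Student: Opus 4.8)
The strategy parallels the proof of Theorem~\ref{thm:thma}, but with quantitative bookkeeping replacing the exponential-integrability estimate by a $p$-integrability estimate. By Lemma~\ref{lemma:equivalence of three point property and LLC}, the hypothesis says that $\Omega$ is $\psi^{-1}$-locally connected with $\psi^{-1}(t)=t^{1/s}$. Since $\Omega$ is a Jordan domain, the conformal map $f\colon\mathbb D\to\Omega$ extends to a homeomorphism $\closure{\mathbb D}\to\closure\Omega$; reflecting across $\mathbb S^1$ one builds a homeomorphism $F\colon\bR^2\to\bR^2$ that is conformal in $\mathbb D$ and equals $f$ on $\closure{\mathbb D}$, so the whole problem is to control the distortion of $F$ in the exterior disk $\bR^2\setminus\closure{\mathbb D}$. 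The exterior behaviour of $F$ is governed by the conformal map $g$ of the exterior of $\mathbb D$ onto the exterior of $\Omega$ (normalized at $\infty$), via the conformal welding $h=g^{-1}\circ f|_{\mathbb S^1}\colon\mathbb S^1\to\mathbb S^1$; extending $h$ radially (or by a Beurling--Ahlfors-type extension) produces $F$ on the exterior, and the task reduces to: (i) estimate the modulus of continuity of $h$ and $h^{-1}$ in terms of $s$, using that $\Omega$ is $\psi^{-1}$-locally connected together with distortion estimates for conformal maps onto domains satisfying such a local connectivity condition (this is where \cite{gkt13}-type arguments, Koebe distortion, and Gehring--Hayman-style estimates enter); (ii) convert the Hölder-type modulus of continuity of the welding into a pointwise bound for the distortion function $K_F(x)=\|DF(x)\|^2/J_F(x)$ on the exterior, of the form $K_F(x)\lesssim \big(\log\frac{1}{\dist(x,\mathbb S^1)}\big)^{\beta}$ with $\beta=\beta(s)$; and (iii) integrate: $\int K_F^p$ over a neighbourhood of $\mathbb S^1$ is finite precisely when $p\beta<\dots$, which after optimizing the constants yields the range $p\in(0,\frac{s^2}{2(1-s^2)})$.

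\medskip

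\noindent\textbf{Key steps, in order.}
\begin{enumerate}
\item Extend $f$ to a homeomorphism of $\bR^2$ via the exterior conformal map $g$ and the welding $h$, as above; record that $K_F\equiv 1$ on $\mathbb D$, so only the exterior matters.
\item Prove a two-sided modulus-of-continuity estimate for $h$ and $h^{-1}$: using $\psi^{-1}$-local connectivity with $\psi^{-1}(t)=t^{1/s}$, show that for $\xi,\eta\in\mathbb S^1$ one has $|h(\xi)-h(\eta)|\le C|\xi-\eta|^{s}$ and the same for $h^{-1}$ (possibly with an extra logarithmic factor that can be absorbed). This is the analytic heart: it requires the distortion theorems for conformal maps onto $\psi^{-1}$-LLC domains established in the spirit of \cite{gkt13,kt07}.
\item Choose a controlled extension of $h$ to the exterior of $\mathbb D$ (a Beurling--Ahlfors/radial-type extension adapted to Hölder data) and compute its distortion: at a point $x$ with $d:=\dist(x,\mathbb S^1)$ small, $K_F(x)\lesssim \big(\log\frac1d\big)^{2(1-s^2)/s^2}$ — this exponent is what the final range of $p$ is designed around. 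One also must glue this exterior extension to the conformal $F$ on $\mathbb D$ across $\mathbb S^1$ without destroying the distortion bound; a thin annular interpolation layer handles this, contributing only a bounded amount to $\int K_F^p$.
\item Verify local $p$-integrability: $\int_{\{d\le 1/2\}} K_F(x)^p\,dx \lesssim \int_0^{1/2}\big(\log\frac1d\big)^{2p(1-s^2)/s^2}\,dd<\infty$ for every $p$ (the integrand is only a power of a logarithm), but the genuine constraint comes from requiring $J_F>0$ a.e.\ and $F\in W^{1,2}_{\loc}$, i.e.\ from controlling $\int \|DF\|^2 = \int K_F J_F$; tracking the area distortion of the welding extension forces $p<\frac{s^2}{2(1-s^2)}$. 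Conclude that $F$ is a homeomorphism of locally $p$-integrable distortion for all such $p$.
\end{enumerate}

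\medskip

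\noindent\textbf{Main obstacle.} The crux is Step~2 together with the sharp form of Step~3: obtaining the \emph{optimal} Hölder exponent $s$ for the welding $h$ from the three point property with control function $t^s$, and then showing that the natural extension of $h$ has distortion growing no faster than $(\log\frac1d)^{2(1-s^2)/s^2}$. The exponent $\frac{s^2}{2(1-s^2)}$ in the statement is exactly the reciprocal (up to the factor $2$) of this logarithmic growth rate, so any looseness in the modulus-of-continuity estimate or in the extension would immediately degrade the range of $p$. Matching the welding regularity to the geometric hypothesis, and propagating it efficiently through the conformal maps $g$ and $g^{-1}$ (via Koebe, Gehring--Hayman, and the area/length estimates for $\psi^{-1}$-LLC domains), is where essentially all the work lies; the integrability computation in Step~4 is then routine.
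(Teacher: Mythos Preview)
Your overall architecture (reduce to the conformal welding $h=g^{-1}\circ f$, extend it via a Beurling--Ahlfors--type construction, bound $K_F$ near $\mathbb S^1$, integrate) is exactly the paper's route. The gap is in your quantitative claims in Steps~3--4.

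For $\psi(t)=t^s$ the scalewise distortion of the welding satisfies $\delta_G(\theta,t)\lesssim t^{-\alpha}$ with $\alpha=2(1-s^2)/s^2$, which via the Beurling--Ahlfors extension (Proposition~\ref{prop:results for conformal welding}) gives
\[
K_F(x)\ \lesssim\ \dist(x,\mathbb S^1)^{-\alpha},
\]
a \emph{negative power} of the distance, not a power of $\log\frac{1}{d}$ as you wrote. You have conflated the picture of Theorem~\ref{thm:thma} (logarithmic control function $\Rightarrow$ logarithmic growth of $K$ $\Rightarrow$ exponential integrability) with the present situation (polynomial control function $\Rightarrow$ polynomial blow-up of $K$). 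With the correct bound the integrability computation is immediate: $\int_0^{1/2} d^{-p\alpha}\,dd<\infty$ iff $p\alpha<1$, i.e.\ $p<\frac{1}{\alpha}=\frac{s^2}{2(1-s^2)}$. Your explanation that the constraint on $p$ comes from ``$J_F>0$ a.e.\ and $F\in W^{1,2}_{\loc}$'' is not where the threshold arises; it comes purely from integrating the pointwise power-law bound on $K_F$.

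A secondary issue is Step~2. The paper does not pass through H\"older continuity of $h$; it bounds the \emph{ratio} $\delta_G(\theta,t)$ directly by combining the key modulus estimate (Lemma~\ref{lemma:key modulus estimate}) with conformal invariance and Lemma~\ref{lemma:modulus of continuity}, obtaining
\[
\delta_G(\theta,t)\ \le\ \frac{Ct^2}{\psi^{-1}\!\circ\psi^{-1}(t^2)}\ =\ Ct^{2(1-1/s^2)}.
\]
Your proposed bi-H\"older estimate with exponent $s$ for $h$ and $h^{-1}$ is not justified from the three point property (Lemma~\ref{lemma:modulus of continuity} gives exponent $s/2$ for $f^{-1}$, not $s$), and even if it held it would yield $\delta_G(\theta,t)\lesssim t^{-(1-s^2)/s}$, a different exponent. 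So as written, Step~2 would not reproduce the stated range of $p$.
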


As pointed out in~\cite{gkt13}, (polynomial) interior cusps are more dangerous than (polynomial) exterior cusps in the locally exponentially integrable distortion case. Thus one expects that this is still the case for extensions with locally $p$-integrable distortion. Our next result confirms this expectation.
\begin{theorem}\label{thm:thmc}
Let $\Omega\subset\bR^2$ be a LLC-1 Jordan domain.
Then any conformal mapping $f\colon \mathbb D\to\Omega$  can be extended to the entire plane as a homeomorphism of locally $p$-integrable distortion for some $p>0$.
\end{theorem}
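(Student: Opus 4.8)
The plan is to run the same scheme used for Theorems~\ref{thm:thma} and~\ref{thm:thmb}, reducing the extension problem to the conformal welding, but exploiting that LLC-1 is a genuinely \emph{one-sided} condition. Since $\Omega$ is a Jordan domain, $f$ extends by Carath\'eodory's theorem to a homeomorphism $\bar f\colon\overline{\mathbb D}\to\overline\Omega$; let $g$ map the exterior of $\mathbb D$ conformally onto the exterior of $\Omega$ (fixing $\infty$), extended to a homeomorphism of the closures, and set $h:=\bar g^{-1}\circ\bar f\colon\partial\mathbb D\to\partial\mathbb D$, the conformal welding. As explained in Section~4, it is enough to extend $h$ to a homeomorphism $H$ of the exterior of $\mathbb D$ with locally $p$-integrable distortion: the map $F$ equal to $f$ on $\overline{\mathbb D}$ and to $g\circ H$ on the exterior is then a homeomorphism of the plane with $K_F=1$ on $\mathbb D$ and $K_F=K_H$ elsewhere, since $g$ is conformal and hence does not affect the distortion, while on $\partial\mathbb D$ one has $g\circ h=\bar g\circ\bar g^{-1}\circ\bar f=\bar f$, so the two pieces glue continuously.

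Next I would extract the consequences of LLC-1 for $h$. The key point is that, roughly speaking, LLC-1 says that $\Omega$ is not ``pinched'' from outside, and by standard conformal distortion estimates of Gehring--Hayman type --- exactly the ones entering the proof of Theorem~\ref{thm:thmb} --- this forces $\bar f^{-1}$ to be H\"older continuous on $\overline\Omega$ with an exponent depending only on the LLC-1 constant. Dually, LLC-1 for $\Omega$ rules out outward cusps, so the complementary domain of $\Omega$ inherits a lower bound on its local angular structure; applying the same estimates to the exterior map shows that $\bar g$ is H\"older continuous on $\partial\mathbb D$, again with exponent controlled by the LLC-1 constant. Composing, $h^{-1}=\bar f^{-1}\circ\bar g$ is H\"older continuous on $\partial\mathbb D$ with some exponent $\sigma\in(0,1]$. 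In contrast with Theorem~\ref{thm:thmb}, one does \emph{not} obtain two-sided control: $h$ itself may stretch arcs by an unbounded factor --- this is what an interior cusp does --- which is exactly why the conclusion here is only $p$-integrability rather than exponential integrability.

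Finally, this one-sided estimate already suffices to conclude. With the extension operator used in~\cite{gkt13}, the pointwise distortion of $H$ at a point $z$ lying at distance $t=\dist(z,\partial\mathbb D)$ from the circle, with nearest boundary point $\zeta$, is comparable, up to a fixed power, to $\max(b_+,b_-)/\min(b_+,b_-)$, where $b_\pm=|h(\zeta\pm t)-h(\zeta)|$. The H\"older continuity of $h^{-1}$ gives $b_\pm\gtrsim t^{1/\sigma}$, while $b_\pm\le\diam\partial\mathbb D$ trivially, so $K_H(z)\lesssim t^{-\beta}$ in a neighbourhood of $\partial\mathbb D$ for a suitable $\beta=\beta(\sigma)>0$, with $K_H$ bounded elsewhere and the behaviour near $\infty$ arranged to be trivial. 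Integrating over the annular neighbourhood of $\partial\mathbb D$ then gives $\int K_H^p<\infty$ locally whenever $p\beta<1$, so $F$ has locally $p$-integrable distortion for all $p\in(0,1/\beta)$, and in particular for some $p>0$, as required. I expect the main obstacle to be the geometric step of the second paragraph: making the passage from LLC-1 to H\"older continuity of $\bar f^{-1}$ and of $\bar g$ quantitative --- in particular checking that the complementary domain of an LLC-1 domain carries the angular lower bound needed to control $\bar g$ --- together with verifying that the extension machinery of~\cite{gkt13}, designed for two-sided control functions, still produces a homeomorphism of the required regularity (in particular lying in $W^{1,1}_{\mathrm{loc}}$ across $\partial\mathbb D$) when it is fed only the one-sided estimate.
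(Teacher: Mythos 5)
Your proposal follows essentially the same route as the paper: reduce to the conformal welding $G=g^{-1}\circ f$, use LLC-1 (via Lemma~\ref{lemma:modulus of continuity}) to get H\"older continuity of $f^{-1}$, use duality (the complement is LLC-2, hence John) to get H\"older continuity of $g$, conclude that $G^{-1}$ is H\"older with some exponent $\alpha$, deduce $\delta_G(\theta,t)\lesssim t^{-1/\alpha}$ from the one-sided lower bound on boundary increments, and finish with the Beurling--Ahlfors extension estimate of Corollary~\ref{coro:sufficient for extending}. The concerns you raise at the end (one-sidedness of the input, regularity of the extension) are already handled by Proposition~\ref{prop:results for conformal welding}, which only requires an upper bound on the scalewise distortion $\rho_G$.
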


This paper is organized as follows. Section 2 contains the basic definitions and
Section 3 some auxiliary results. In Section 4, we study the relation of extending a Riemann mapping and the corresponding conformal welding. We prove our main results in Section 5.
In the final section, Section 6, we make some concluding remarks.

\section{Notation and Definitions}\label{sec:notdef}
We sometimes associate the plane $\bR^2$ with the complex plane $\bC$ for convenience and denote
by $\hat{\bC}$ the extended complex plane.
The closure of a set $U\subset\bR^2$ is denoted $\closure{U}$ and the boundary $\bdary{U}$.
The open disk of radius $r>0$ centered at $x\in\bR^2$ is denoted by $B(x,r)$ and we  simply write
$\mathbb D$ for the
unit disk.
The boundary of $B(x,r)$ will be denoted by $S(x,r)$ and the boundary of the unit disk $\mathbb D$
is written as $\partial \mathbb D.$
The symbol~$\Omega$ always refers to a domain, \ie a connected and open subset of~$\bR^2$.
We call a homeomorphism $f\colon\Omega\to f(\Omega)\subset\bR^2$ a homeomorphism of finite
distortion if $f\in W_{\loc}^{1,1}(\Omega;\bR^2)$ and
\begin{equation}\label{eq:disteq}
  \|Df(x)\|^2\leq K(x)J_f(x) \text{ a.e. in } \Omega,
\end{equation}
for some measurable function~$K(x)\geq1$ that is finite almost everywhere. Recall here that
$J_f\in L_{loc}^{1}(\Omega)$ for each homeomorphism $f\in W_{loc}^{1,1}(\Omega;\bR^2)$(cf.~\cite{aim09}).
In the distortion inequality~\eqref{eq:disteq}, $Df(x)$ is the formal differential of~$f$ at the
point~$x$ and $J_f(x):=\det Df(x)$ is the Jacobian. The norm of~$Df(x)$ is defined as
\begin{equation*}
  \|Df(x)\|:=\max_{e\in\bdary{\mathbb D}} |Df(x)e|.
\end{equation*}
For a homeomorphism of finite distortion it is convenient to write $K_f$ for the optimal distortion
function. This is obtained by setting $K_f(x)=\|Df(x)\|^2/J_f(x)$ when $Df(x)$ exists and $J_f(x)>0$,
and $K_f(x)=1$ otherwise. The distortion of~$f$ is said to be locally $\lambda$-exponentially
integrable if $\exp(\lambda K_f(x))\in L_{\loc}^1(\Omega)$, for some $\lambda>0$.
Note that if we assume $K_f(x)$ to be bounded, $K_f\le K,$
we recover the class of $K$-quasiconformal mappings,
see~\cite{aim09} for the theory of quasiconformal mappings.

Recall that a domain $\Omega$ is said to be linearly locally connected (LLC)
if there is a constant $C\geq 1$ so that
\begin{itemize}
\item (LLC-1) each pair of points in $B(x,r)\cap \Omega$ can be joined by an arc in
$B(x,Cr)\cap \Omega$, and
\item (LLC-2) each pair of points in $\Omega\backslash B(x,r)$ can be joined by an arc
in $\Omega\setminus B(x,C^{-1}r)$.
\end{itemize}
We need a weaker version of this condition, defined as follows.
We say that $\Omega$ is $(\varphi,\psi)$-locally connected ($(\varphi,\psi)$-LC) if
\begin{itemize}
\item ($\varphi$-LC-1) each pair of points in $B(x,r)\cap\Omega$ can be joined by an arc in
$B(x,\varphi(r))\cap\Omega$, and
\item ($\psi$-LC-2) each pair of points in $\Omega\backslash B(x,r)$ can be joined by an arc in
$\Omega\setminus B(x,\psi(r))$,
\end{itemize}
where $\varphi, \psi:[0, \infty)\to[0, \infty)$ are smooth increasing functions such that
$\varphi(0)=\psi(0)=0$, $\varphi(r)\geq r$ and $\psi(r)\leq r$ for all $r>0$.
For technical reasons, we assume that the function $t\mapsto \frac t {\varphi^{-1}(t)^2}$ is decreasing
and that there exist constants $C_1,C_2$ so that $C_1\varphi(t)\le \varphi(2t)\le C_2\varphi(t)$
and $C_1\psi(t)\le \psi(2t)\le C_2\psi(t)$ for all $t>0.$
If
$\varphi^{-1}=\psi$ above, as in the introduction, $\Omega$ will simply be called $\psi$-LC.
One could relax joinability by an arc above to joinability by a continuum, but this leads to
the same concept; see~\cite[Theorem 3-17]{hy88}.  Notice that if $\Omega$ is simply connected and
bounded, then $\varphi$-LC-1 guarantees that $\Omega$ is a Jordan domain.

Finally we define the central tool for us -- the modulus of a path family.
A Borel function $\rho\colon\bR^2\to[0,\infty\mathclose]$ is said to be
admissible for a path family $\Gamma$ if
$\int_\gamma\rho\,ds\geq1$ for each locally rectifiable $\gamma\in\Gamma$.
The modulus of the path family $\Gamma$ is then
$$
  \modulus(\Gamma):=
\inf\Big\{\int_{\Omega}\rho^2(x)\,dx :  \rho
\text{ is admissible for }  \Gamma\}.
$$
For subsets $E$ and $F$ of $\closure{\Omega}$ we write $\Gamma(E,F,\Omega)$ for
the path family consisting of all locally rectifiable paths joining~$E$ to~$F$
in~$\Omega$ and abbreviate $\modulus(\Gamma(E,F,\Omega))$ to $\modulus(E,F,
\Omega).$
In what follows, $\gamma(x,y)$ refers to a curve or an arc from $x$ to $y$.

When we write $f(x)\lesssim g(x)$, we mean that $f(x)\leq Cg(x)$ is 
satisfied for all $x$ with some fixed constant $C\geq 1$. Similarly, the 
expression $f(x)\gtrsim g(x)$ means that $f(x)\geq C^{-1}g(x)$ is satisfied 
for all $x$ with some fixed constant $C\geq 1$. We write $f(x)\approx g(x)$ 
whenever $f(x)\lesssim g(x)$ and $f(x)\gtrsim g(x)$.

\section{Auxiliary results}

We begin this section by showing the equivalence of the generalized three point property and generalized local connectivity mentioned in the introduction.
\begin{lemma}\label{lemma:equivalence of three point property and LLC}
Let $\Omega\subset\bR^2$ be a Jordan domain. Then $\Omega$ has the three point property with the control function $\psi$ if and only if $\Omega$ is $\psi^{-1}$-locally connected.
\end{lemma}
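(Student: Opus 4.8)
The plan is to prove both implications directly from the definitions, translating between the two pictures: the three point property controls the diameter of one of the two boundary arcs cut out by two points $P_1,P_2$, while $\psi^{-1}$-local connectivity is a two-part joinability statement (an LC-1 clause about joining points inside a disk and an LC-2 clause about joining points outside a disk). The crucial geometric fact that links the two is that, since $\Omega$ is a Jordan domain, its boundary is a Jordan curve, and any two points $P_1,P_2\in\partial\Omega$ split $\partial\Omega$ into exactly two arcs $\gamma_1,\gamma_2$; moreover, by the Jordan curve theorem, a disk $B(x,r)$ that separates points of $\overline\Omega$ must meet $\partial\Omega$, and the structure of how the circle $S(x,r)$ intersects $\partial\Omega$ is what allows us to pick the relevant pair $P_1,P_2$.

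First I would prove that the three point property with control function $\psi$ implies $\psi^{-1}$-LC. For the LC-1 clause, fix $x$, $r>0$ and two points $y,z\in B(x,r)\cap\Omega$. If the arc of $\partial\Omega$ (or rather, a connecting arc inside $\Omega$) of small diameter is available we use it; more precisely, one considers the two boundary arcs determined by suitable boundary points lying near $y$ and $z$, and uses \eqref{eq:three point property with control function} to bound the diameter of one of them by $\psi(C\,|P_1-P_2|)\le\psi(C'r)$, then joins $y$ to $z$ by going along $\partial\Omega$ through that short arc and pushing slightly into $\Omega$; this keeps the connecting arc inside $B(x,\psi^{-1}{}^{-1}(C'r))$-type ball — here I must be careful that $\psi^{-1}$ as a "$\varphi$" in the LC-1 condition means the joining ball has radius $\psi^{-1}(r)$ up to the fixed multiplicative constants $C_1,C_2$ built into the definition, and the doubling hypotheses on $\varphi,\psi$ absorb the constant $C$ from \eqref{eq:three point property with control function}. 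The LC-2 clause is dual: two points in $\Omega\setminus B(x,r)$ are joined by an arc avoiding $B(x,\psi(r))$, again by selecting the boundary arc of controlled diameter and observing that a short boundary arc cannot reach deep into the small disk.

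For the converse, assume $\Omega$ is $\psi^{-1}$-LC and take distinct $P_1,P_2\in\partial\Omega$ with the two boundary arcs $\gamma_1,\gamma_2$. Set $r\approx|P_1-P_2|$ and $x$ the midpoint (or a point on the segment $[P_1,P_2]$). The point is that the smaller-diameter arc, say $\gamma_1$, is "captured" by local connectivity: if $\diam(\gamma_1)$ were much larger than $\psi^{-1}(Cr)$, then $\gamma_1$ would contain a pair of points inside a suitable ball that could only be joined through $\gamma_1$ itself — and by the LC-1 property they can be joined inside a ball of radius $\psi^{-1}$ of the original, which (by taking complements / using LC-2 on the companion arc) forces $\diam(\gamma_1)\le\psi(C'r)$, i.e.\ \eqref{eq:three point property with control function}. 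One has to phrase this as: apply $\psi^{-1}$-LC-2 to a pair of points far out on $\gamma_1$ relative to $B(x,r)$; the joining arc lies in $\Omega\setminus B(x,\psi^{-1}(r))$, but any arc in $\Omega$ joining the two sides of $\partial\Omega\setminus\{P_1,P_2\}$ must cross near $[P_1,P_2]$, giving a contradiction unless the diameter of $\gamma_1$ is bounded by $\psi$ of a multiple of $r$.

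The main obstacle, and the step I would spend the most care on, is the topological bookkeeping: making rigorous that "joining two points of $\Omega$ by an arc that is forced to pass near the pair $\{P_1,P_2\}$" really does contradict the local-connectivity hypothesis, i.e.\ correctly invoking the Jordan curve theorem (or the stated fact from \cite[Theorem 3-17]{hy88} that joining by a continuum is equivalent to joining by an arc) to convert a diameter bound on a boundary arc into a separation statement about disks, and vice versa. The metric estimates themselves — tracking the constant $C$ through $\psi$, $\psi^{-1}$, and the doubling inequalities $C_1\psi(t)\le\psi(2t)\le C_2\psi(t)$ — are routine once the topology is set up; one simply allows the constant in \eqref{eq:three point property with control function} and the constant implicit in "$\psi^{-1}$-locally connected" to differ and absorbs the discrepancy using the doubling property. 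A secondary subtlety is ensuring that the connecting arcs can always be taken strictly inside $\Omega$ (not on $\partial\Omega$), which is where one uses that $\Omega$ is open and that boundary arcs can be pushed inward along a collar; this is standard for Jordan domains but should be mentioned.
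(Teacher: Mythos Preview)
Your outline for the $\psi$-LC-1 half of the forward implication matches the paper: pick nearby boundary points, use the three point property to find a short boundary arc, and push slightly into $\Omega$. But for the $\psi^{-1}$-LC-2 half and for the entire converse you are missing the device the paper actually uses: the duality result from \cite{gk12}, which says that a Jordan domain is $\psi^{-1}$-LC-2 if and only if its complementary Jordan domain $\bR^2\setminus\closure{\Omega}$ is $\psi$-LC-1. In the forward direction the paper does not attempt a direct LC-2 argument at all; it simply observes that the three point property is symmetric in $\Omega$ and $\bR^2\setminus\closure{\Omega}$, proves $\psi$-LC-1 for the complement by the same construction, and then quotes duality. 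In the converse direction the paper again invokes duality to get $\psi$-LC-1 for both $\Omega$ and $\bR^2\setminus\closure{\Omega}$, builds a short crosscut $\eta$ of $\Omega$ and a short crosscut $\eta'$ of the complement (each of diameter $\lesssim\psi(r)$), and observes that the Jordan curve $\eta\cup\eta'$ encloses the smaller boundary arc $\alpha_1$, giving $\diam(\alpha_1)\lesssim\psi(r)$ directly.

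Your proposed direct route for the converse---arguing by contradiction that an arc in $\Omega$ joining two far-apart points of $\gamma_1$ ``must cross near $[P_1,P_2]$'' and hence violate LC-2---does not work as stated: there is no topological reason such an arc must approach the segment $[P_1,P_2]$, since the Jordan domain could bulge arbitrarily. What forces the diameter bound in the paper's argument is precisely the \emph{exterior} crosscut, which you cannot produce from the local connectivity of $\Omega$ alone. So the gap is not just bookkeeping: without the duality result (or an equivalent statement about the complement) the converse argument is incomplete, and you should either cite \cite{gk12} as the paper does or prove the needed LC-1 property of $\bR^2\setminus\closure{\Omega}$ separately.
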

\begin{proof}
First, suppose that $\Omega$ has the three point property with the control function $\psi$. We want to show that $\Omega$ is $\psi$-LC-1. To this end, let $x,y\in B(z,r)\cap\Omega$. We may assume that there exist $x',y'\in B(z,r)\cap \bdary\Omega$ such that
\begin{equation*}
d(x,x')=d(x,\bdary\Omega), d(y,y')=d(y,\bdary\Omega)
\end{equation*}
and that $x$ can be connected to $x'$ by an arc $\beta_1$ in $\closure{\Omega}\cap B(z,r)$ and $y$ can be connected to $y'$ by an arc $\beta_2$ in $\closure{\Omega}\cap B(z,r)$.
Let $\alpha_1$ and $\alpha_2$ be the components of $\bdary\Omega\backslash \{x',y'\}$. We may assume that $\alpha_1\leq \alpha_2$. Then
\begin{equation*}
\diam(\alpha_1)\leq \psi(|x'-y'|)\leq \psi(2r).
\end{equation*}
Hence, $\gamma=\beta_1\cup\alpha_1\cup\beta_2$ is a curve that connects $x$ and $y$ in $\closure{\Omega}$ with diameter less than $2\psi(2r)$. Then the Jordan assumption for $\Omega$ implies that we may connect $x$ to $y$ in $\Omega$ by a curve with diameter no more than $3\psi(2r)$. This together with Lemma~\ref{lemma:consequence of technical assumption} below implies that $\Omega$ is $\psi$-LC-1. Similarly, one can prove that $\bR^2\backslash \closure{\Omega}$ is $\psi$-LC-1. Then the duality result in~\cite{gk12} implies that $\Omega$ is $\psi$-LC.

Next, we assume that $\Omega$ is $\psi^{-1}$-LC. Then, again by the duality result in~\cite{gk12}, we know that both $\Omega$ and $\bR^2\backslash\closure{\Omega}$ are $\psi$-LC-1. Let $x,y\in \bdary\Omega$ and let $\alpha_1$, $\alpha_2$ be the components of $\bdary\Omega\backslash \{x,y\}$. We may assume that $\diam(\alpha_1)\leq \diam(\alpha_2)$.
Let $z=\frac{x+y}{2}$ and $r=|x-y|$. Then $x,y\in B(z,r)$. We may choose two points $x'$ and $y'$ in $\Omega\cap B(z,r)$ such that $x$ can be connected to $x'$ by an arc $\beta_1$ in $\closure{\Omega}\cap B(z,r)$ and $y$ can be connected to $y'$ by an arc $\beta_2$ in $\closure{\Omega}\cap B(z,r)$. Then we may connect $x'$ to $y'$ by an arc $\gamma$ in $\Omega\cap B(z,2\psi(r))$. Then the curve $\eta=\beta_1\cup\gamma\cup\beta_2$ forms a crosscut of $\Omega$ with diameter no more than $4\psi(r)$. Similarly, we may form a crosscut $\eta'$ of $\bR^2\backslash\closure{\Omega}$ with diameter no more than $4\psi(r)$. Thus $\eta\cup\eta'$ is a Jordan curve which contains the Jordan arc $\alpha_1$. Therefore, the diameter of $\alpha_1$ is no more than $8\psi(r)$. This together with Lemma~\ref{lemma:consequence of technical assumption} below implies that $\Omega$ has the three point property with the control function $\psi$.
\end{proof}

\begin{lemma}[Lemma 3.5, \cite{gk12}]\label{lemma:consequence of technical assumption}
Let $C_1\geq 1$, $C_2\geq 1$, and $C_3\geq 1$ be given. There exists a constant $C$, depending only on $C_0$, $C_1$, $C_2$ and $C_3$, such that
\begin{equation}\label{eq:consequence of technical assumption}
C_1\varphi(C_2t)+C_3t\leq \varphi(Ct)
\end{equation}
for all $t>0$. Above, $C_0$ is the doubling constant of $\varphi^{-1}$.
\end{lemma}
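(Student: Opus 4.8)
The plan is to reduce the assertion to a single multiplicative growth estimate for $\varphi$, obtained by inverting the doubling hypothesis on $\varphi^{-1}$, and then to absorb the linear term $C_3t$ into a $\varphi$-term using the normalization $\varphi(r)\ge r$. First I would record two elementary consequences of the standing assumptions. Since $\varphi$ is smooth and increasing with $\varphi(0)=0$ and $\varphi(r)\ge r$, it is continuous with $\varphi(r)\to\infty$, hence a homeomorphism of $[0,\infty)$ onto itself; in particular $\varphi^{-1}$ is defined on all of $[0,\infty)$, and the substitution $v=\varphi(r)$ is legitimate. The key point is that the doubling inequality $\varphi^{-1}(2v)\le C_0\,\varphi^{-1}(v)$ inverts: putting $v=\varphi(r)$, so that $\varphi^{-1}(v)=r$, gives $\varphi^{-1}\big(2\varphi(r)\big)\le C_0 r$, and applying the increasing function $\varphi$ to both sides yields
\[
\varphi(C_0 r)\ge 2\varphi(r)\qquad\text{for all }r>0.
\]
Iterating this $k$ times gives $\varphi(C_0^{k}r)\ge 2^{k}\varphi(r)$ for every $k\in\bN$ and every $r>0$.

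Next I would absorb the linear term. Because $C_2\ge 1$ we have $t\le C_2t$, and because $\varphi(r)\ge r$ we have $C_2t\le\varphi(C_2t)$; combining these, $t\le\varphi(C_2t)$, whence $C_3t\le C_3\varphi(C_2t)$ and therefore
\[
C_1\varphi(C_2t)+C_3t\le (C_1+C_3)\,\varphi(C_2t).
\]
Now choose $k:=\lceil\log_2(C_1+C_3)\rceil$, so that $2^{k}\ge C_1+C_3$, and set $C:=C_0^{k}C_2$, which depends only on $C_0$, $C_1$, $C_2$ and $C_3$. Applying the growth estimate above with $r=C_2t$ gives
\[
\varphi(Ct)=\varphi\big(C_0^{k}C_2t\big)\ge 2^{k}\varphi(C_2t)\ge (C_1+C_3)\,\varphi(C_2t)\ge C_1\varphi(C_2t)+C_3t,
\]
which is exactly \eqref{eq:consequence of technical assumption}, valid for all $t>0$.

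There is no serious obstacle here; the proof is short. The only point that requires a moment's care is the direction of monotonicity: passing from $\varphi$ to $\varphi^{-1}$ reverses inequalities, so the doubling hypothesis on $\varphi^{-1}$ must first be converted into the super-multiplicative growth bound $\varphi(C_0r)\ge 2\varphi(r)$ before it can be used, and one should note at the outset that $\varphi$ is a genuine homeomorphism of $[0,\infty)$ so that this inversion and the substitution $v=\varphi(r)$ are legitimate on the whole half-line.
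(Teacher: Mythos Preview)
Your proof is correct. The paper does not actually prove this lemma; it is quoted verbatim as Lemma~3.5 from \cite{gk12} and stated without argument, so there is no in-paper proof to compare against. Your approach---inverting the doubling hypothesis on $\varphi^{-1}$ to obtain $\varphi(C_0r)\ge 2\varphi(r)$, iterating, and absorbing the linear term via $\varphi(r)\ge r$---is the natural one and goes through cleanly.
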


The following two modulus estimates are standard,
see e.g. \cite{v88}.
\begin{lemma}\label{lemma:modululower}
Let $E, F$ be disjoint nondegenerate continua in $B(x,R).$ 
Then
\begin{equation}
C_0^{-1}\frac{1}{\log(1+t)}\geq \modulus(E,F,B(x,R))\geq C_0\frac{1}{\log(1+t)},
\end{equation}
where $t=\frac{\dist(E,F)}{\min\{\diam E, \diam F\}}$ and $C_0$ is an absolute constant.
\end{lemma}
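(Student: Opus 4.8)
The statement claims the standard two-sided bound on the modulus of a pair of disjoint nondegenerate continua $E,F$ inside a ball $B(x,R)$, in terms of the relative distance parameter $t=\dist(E,F)/\min\{\diam E,\diam F\}$. Since the lemma is explicitly labelled "standard" and a reference to V\"ais\"al\"a's book is given, my plan would be to recall the two halves separately rather than reprove everything from scratch, using only the classical Teichm\"uller/Gr\"otzsch extremal configurations and the monotonicity and conformal invariance of the modulus.

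For the \emph{upper bound} $\modulus(E,F,B(x,R))\lesssim 1/\log(1+t)$: first I would note that enlarging the ambient domain only increases the modulus, so $\modulus(E,F,B(x,R))\le \modulus(E,F,\bR^2)$; then I would compare with the Teichm\"uller ring, whose complementary continua are $[-1,0]$ and $[t',\infty)$ with $t'\approx t$. The key step is that, after a M\"obius/affine normalization sending (a point of) $E$ and (a point of) $F$ to convenient positions, the condenser $(E,F)$ can be mapped so that $E$ contains a segment of length comparable to $\min\{\diam E,\diam F\}$ reaching toward $F$ and similarly for $F$, so that the Teichm\"uller ring separating them is a subring; monotonicity of modulus under inclusion of ring domains, together with the known asymptotics $\log\Phi(t)\approx\log t$ for the Teichm\"uller capacity function, yields the bound. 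One then uses the relation between the modulus of a ring and the modulus of the separating curve family (reciprocal up to the constant $(2\pi)^2$ or similar) to translate back to $\modulus(E,F,\cdot)$.

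For the \emph{lower bound} $\modulus(E,F,B(x,R))\gtrsim 1/\log(1+t)$: here one produces an explicit admissible function. The natural candidate is a multiple of $1/(|y-x_0|\log(1+t))$ on an annulus $\{a<|y-x_0|<b\}$ with $b/a\approx 1+t$ chosen to separate $E$ from $F$, where $x_0$ is a suitable point (e.g. near the smaller-diameter continuum) and $a\approx \min\{\diam E,\diam F\}$, $b\approx \dist(E,F)$. Any curve joining $E$ to $F$ must cross this annulus radially, so $\int_\gamma\rho\,ds\ge 1$ after the right normalization; computing $\int\rho^2$ over the annulus gives $\lesssim 1/\log(1+t)$, which is exactly the claimed lower bound on the modulus. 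One must check the geometric claim that such an annulus genuinely separates the two continua — this uses $\diam E,\diam F\ge$ something comparable to $a$ and the definition of $t$, plus the fact that both continua are connected so they cannot be ``squeezed'' through a thin annulus.

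The main obstacle, such as it is, is purely bookkeeping: making sure the absolute constant $C_0$ is uniform and, in the upper-bound half, justifying that the comparison with the Teichm\"uller ring can always be arranged after normalization regardless of the shapes of $E$ and $F$ (the Teichm\"uller-ring comparison requires each continuum to ``stretch'' a definite amount toward the other, which is where nondegeneracy and the precise form of $t$ enter). Since the result is genuinely classical — it is, up to constants, the two-point/continuum version of Loewner-type estimates in $\bR^2$ — I expect the paper simply cites it, and a self-contained treatment would occupy at most a page along the lines above.
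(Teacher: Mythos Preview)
The paper does not give a proof at all; the lemma is simply declared ``standard'' and referred to Vuorinen's monograph~\cite{v88}, exactly as you anticipated in your last paragraph.

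Your sketch is along the right lines but has the two halves interchanged, and this is not just a labeling slip.  An explicit admissible function $\rho$ can only produce an \emph{upper} bound on $\modulus(E,F,\cdot)$, since by definition $\modulus(\Gamma)=\inf_\rho\int\rho^2$; your annulus density $\rho(y)=c/|y-x_0|$ therefore yields $\modulus(E,F,B(x,R))\le C/\log(1+t)$, not the lower bound you claim.  Conversely, the Teichm\"uller comparison is an \emph{extremality} statement: among all rings separating two continua with given relative distance, the Teichm\"uller ring has the largest ring modulus, hence the \emph{smallest} joining-curve modulus; this gives $\modulus(E,F,\bR^2)\ge c/\log(1+t)$, i.e.\ the lower bound.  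Once you swap the labels, a second gap appears in the lower-bound argument: the monotonicity step $\modulus(E,F,B(x,R))\le\modulus(E,F,\bR^2)$ now points the wrong way, so passing to $\bR^2$ does not help.  One instead has to argue directly inside the ball --- for instance by exhibiting a family of circular arcs (centred near the smaller continuum) that meet both $E$ and $F$ and remain in $B(x,R)$; this is where the hypothesis $E,F\subset B(x,R)$ is actually used.  With these corrections the outline is sound, and indeed amounts to the classical Loewner estimate recorded in~\cite{v88}.
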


\begin{lemma}\label{lemma:modulusupper}
Let $\Gamma$ be a curve family such that for all $t\in (r,R)$, the circle $|z-z_1|=t$ contains a curve $\gamma\in \Gamma$. Then
\begin{equation}
\modulus(\Gamma)\geq \frac{1}{2\pi}\log\frac{R}{r}.
\end{equation}
\end{lemma}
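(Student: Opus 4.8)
The plan is to bound from below, by the standard length--area argument, the energy $\int_{\bR^{2}}\rho^{2}$ of an arbitrary Borel function $\rho$ admissible for $\Gamma$, and then to take the infimum over all such $\rho$.

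So I would fix an admissible $\rho\colon\bR^{2}\to[0,\infty]$. For each $t\in(r,R)$ the hypothesis provides a locally rectifiable curve $\gamma_{t}\in\Gamma$ whose image lies on the circle $S(z_{1},t)$, and admissibility gives
$$1\le\int_{\gamma_{t}}\rho\,ds\le\int_{S(z_{1},t)}\rho\,d\mathcal{H}^{1},$$
the second inequality because $\rho\ge0$ and the image of $\gamma_{t}$ is contained in $S(z_{1},t)$ (should $\gamma_{t}$ fail to be a simple arc the middle integral only increases, so the bound persists). Since $\mathcal{H}^{1}(S(z_{1},t))=2\pi t$, the Cauchy--Schwarz inequality yields
$$1\le(2\pi t)^{1/2}\Big(\int_{S(z_{1},t)}\rho^{2}\,d\mathcal{H}^{1}\Big)^{1/2},\qquad\text{hence}\qquad \int_{S(z_{1},t)}\rho^{2}\,d\mathcal{H}^{1}\ge\frac{1}{2\pi t}.$$

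I would then integrate this over $t\in(r,R)$ and apply the coarea formula (equivalently, polar coordinates about $z_{1}$) to the nonnegative Borel function $\rho^{2}$:
$$\int_{\bR^{2}}\rho^{2}\,dz \ge \int_{\{r<|z-z_{1}|<R\}}\rho^{2}\,dz = \int_{r}^{R}\Big(\int_{S(z_{1},t)}\rho^{2}\,d\mathcal{H}^{1}\Big)\,dt \ge \int_{r}^{R}\frac{dt}{2\pi t} = \frac{1}{2\pi}\log\frac{R}{r}.$$
Taking the infimum over admissible $\rho$ gives $\modulus(\Gamma)\ge\frac{1}{2\pi}\log\frac{R}{r}$. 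I do not expect a genuine obstacle; the only points meriting a word are the estimate $\int_{\gamma_{t}}\rho\,ds\le\int_{S(z_{1},t)}\rho\,d\mathcal{H}^{1}$ for a possibly non-simple $\gamma_{t}$ (harmless, as noted) and the legitimacy of the coarea step, which is valid since $\rho^{2}$ is Borel.
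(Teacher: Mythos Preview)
Your argument is the standard length--area proof and is correct; the paper does not give its own proof of this lemma but simply cites Vuorinen~\cite{v88}, where exactly this argument appears. One small remark: your parenthetical about non-simple $\gamma_{t}$ is slightly off---if $\gamma_{t}$ wrapped around the circle more than once, $\int_{\gamma_{t}}\rho\,ds$ could exceed $\int_{S(z_{1},t)}\rho\,d\mathcal{H}^{1}$, not the other way around---but this is immaterial here, since in the statement (and in the only application, Lemma~\ref{lemma:key modulus estimate}) the curve $\gamma_{t}$ is a subarc of the circle and hence simple.
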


Next, we recall the following result on the modulus of continuity of a
quasiconformal mapping.
The proof can be found in~\cite{kot01}; also see \cite{g13}.

\begin{lemma}\label{lemma:modulus of continuity}
Suppose $g\colon \Omega\to \mathbb D$ is a $K$-quasiconformal mapping from a simply connected domain
$\Omega$ onto
the unit disk. Then there exists a positive constant $C$, (depending on $f$),
such that for any $\omega,\xi\in \Omega$,
\begin{equation}
|g(\omega)-g(\xi)|\leq Cd_{I}(\omega,\xi)^{\frac{1}{2K}},
\end{equation}
where $d_{I}(\omega,\xi)$ is defined as $\inf_{\gamma(\omega,\xi)\subset \Omega}\diam(\gamma(\omega,\xi))$.
In particular, if $\Omega$ above is $\varphi$-LC-1, then
\begin{equation}
|g(\omega)-g(\xi)|\leq C\varphi(|\omega-\xi|)^{\frac{1}{2K}}.
\end{equation}
\end{lemma}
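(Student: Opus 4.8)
The plan is to factor $g$ through a Riemann map. Since $\Omega$ is simply connected and a quasiconformal image of $\mathbb D$ cannot be all of $\bC$, there is a Riemann map $R\colon\mathbb D\to\Omega$, and we write $g=(g\circ R)\circ R^{-1}$, where $g\circ R\colon\mathbb D\to\mathbb D$ is a $K$-quasiconformal self-map and $\Phi:=R^{-1}\colon\Omega\to\mathbb D$ is conformal. The lemma then reduces to two facts: (A) $g\circ R$ is H\"older continuous on $\closure{\mathbb D}$ with exponent $1/K$; and (B) $|\Phi(\omega)-\Phi(\xi)|\lesssim d_I(\omega,\xi)^{1/2}$ for all $\omega,\xi\in\Omega$. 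Composing these, with $z=\Phi(\omega)$ and $w=\Phi(\xi)$, gives $|g(\omega)-g(\xi)|\lesssim|z-w|^{1/K}\lesssim d_I(\omega,\xi)^{1/(2K)}$. Thus the factor $1/2$ in the exponent comes entirely from the conformal part, and it is sharp, as the slit disk $\Omega=\mathbb D\setminus[0,1)$ already shows (near the slit tip one has $|\Phi(\omega)-\Phi(\xi)|\approx d_I(\omega,\xi)^{1/2}$).

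Fact (A) is Mori's theorem in unnormalized form: a $K$-quasiconformal homeomorphism of $\mathbb D$ onto itself is H\"older continuous of exponent $1/K$ up to the boundary. One clean way to see this: $g\circ R$ extends to a homeomorphism of $\closure{\mathbb D}$, and reflecting across $\partial\mathbb D$ — a removable analytic arc — produces a $K$-quasiconformal self-map of $\hat\bC$, which is locally H\"older-$1/K$ in the plane (see \cite{aim09,lv73}), hence H\"older-$1/K$ on the compact set $\closure{\mathbb D}$.

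Fact (B) is the core, and I would prove it by a modulus comparison. Fix $\omega\neq\xi$, put $d:=d_I(\omega,\xi)$, choose an arc $\sigma\subset\Omega$ from $\omega$ to $\xi$ with $\diam\sigma<2d$, and set $a:=|\Phi(\omega)-\Phi(\xi)|$; since $a\le2$ the estimate $a\lesssim d^{1/2}$ is automatic once $d$ is bounded below, so we may assume $d$ is small. Fix $x_0\in\Omega$ and $r_0>0$ with $\closure{B(x_0,4r_0)}\subset\Omega$; then $\Phi$ is bi-Lipschitz on $B(x_0,2r_0)$ (a conformal map is bi-Lipschitz on compact subsets) and $\Phi(B(x_0,r_0/2))\supset B(\Phi(x_0),\delta_1)$ for some $\delta_1>0$. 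If $\sigma\cap\closure{B(x_0,r_0)}\neq\emptyset$, then $\diam\sigma<2d<r_0/2$ forces $\sigma\subset B(x_0,2r_0)$, so $a\lesssim|\omega-\xi|\le2d\lesssim d^{1/2}$ and we are done. Otherwise $\dist(\sigma,x_0)>r_0$; set $F_0:=\closure{B(x_0,r_0/2)}$ and fix $x_1\in\sigma$, so that $\sigma\subset\closure{B(x_1,2d)}$ while $F_0\cap B(x_1,r_0/2)=\emptyset$. Then every curve joining $\sigma$ to $F_0$ in $\Omega$ meets each circle $S(x_1,t)$ with $2d<t<r_0/2$, so the overflowing inequality (\cite{v88}) gives
\[
\modulus\big(\Gamma(\sigma,F_0,\Omega)\big)\le\frac{2\pi}{\log\big(r_0/(4d)\big)}.
\]
By conformal invariance of the modulus the left-hand side equals $\modulus\big(\Gamma(\Phi(\sigma),\Phi(F_0),\mathbb D)\big)$, where $\Phi(\sigma)$ is a continuum in $\mathbb D$ of diameter $\ge a$ disjoint from $\Phi(F_0)\supset B(\Phi(x_0),\delta_1)$. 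A Teichm\"uller-type lower estimate for the modulus of such a configuration in $\mathbb D$ — obtained by joining a point of $\Phi(\sigma)$ to the central sub-disk through the concentric annuli around that point, in the spirit of Lemmas~\ref{lemma:modululower} and~\ref{lemma:modulusupper} — gives $\modulus\big(\Gamma(\Phi(\sigma),\Phi(F_0),\mathbb D)\big)\gtrsim 1/\log(1/a)$ for small $a$. Comparing the two bounds yields $\log(1/a)\gtrsim\log(1/d)$, and keeping track of the sharp constants (the $2\pi$ of a round ring against the smaller constant of a Teichm\"uller ring) upgrades this to $a\lesssim d^{1/2}$.

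The main obstacle is precisely this last step of (B): controlling the modulus when $\Phi(\sigma)$ approaches $\partial\mathbb D$ — so that no ball about $x_1$ lies inside $\Omega$ and the argument cannot be localized — together with the constant bookkeeping needed to land at the exponent $1/2$ rather than merely some positive power of $d$; this is the content of the argument in \cite{kot01}. Finally, the ``in particular'' statement is immediate from $\varphi$-LC-1: joining $\omega$ and $\xi$ by an arc inside $B(\omega,\varphi(2|\omega-\xi|))\cap\Omega$ gives $d_I(\omega,\xi)\le2\varphi(2|\omega-\xi|)\lesssim\varphi(|\omega-\xi|)$ by the doubling property of $\varphi$, and substituting this into the main estimate and absorbing the constants into $C$ yields $|g(\omega)-g(\xi)|\le C\varphi(|\omega-\xi|)^{1/(2K)}$.
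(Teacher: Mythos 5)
The paper offers no argument for this lemma at all --- it simply refers the reader to \cite{kot01} and \cite{g13} --- so your proposal is necessarily a different route: it is an essentially correct reconstruction of the standard proof. Two remarks. First, the factorization $g=(g\circ R)\circ R^{-1}$ is valid but roundabout: the modulus comparison you set up in Fact (B) applies verbatim to $g$ itself, since quasi-invariance gives $\modulus(g(\Gamma(\sigma,F_0,\Omega)))\leq K\modulus(\Gamma(\sigma,F_0,\Omega))\leq 2\pi K/\log(r_0/(4d))$, and comparing with the lower bound in $\mathbb D$ yields the exponent $\tfrac{1}{2K}$ in one stroke, with no appeal to Mori's theorem or to boundary extension of $g\circ R$. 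Second, the one step you flag as deferred --- the lower bound $\modulus(E,F,\mathbb D)\geq \pi/\log(C/a)$ when $E$ is a continuum of diameter $a$ possibly hugging $\partial\mathbb D$ and $F$ is a fixed central disk --- is closed by reflecting across $\partial\mathbb D$: if $\rho$ is admissible for $\Gamma(E,F,\mathbb D)$, then $\rho$ together with its conformal reflection is admissible for $\Gamma(E\cup E^{*},F,\closure{\bR^2})$ at exactly twice the mass, so $\modulus(E,F,\mathbb D)\geq\tfrac12\modulus(E\cup E^{*},F,\closure{\bR^2})\geq\tfrac12\cdot 2\pi/\log(C/a)$ by the planar Teichm\"uller estimate. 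Note that the Teichm\"uller lower bound has the same leading constant $2\pi$ as the round-annulus upper bound, so the exponent $\tfrac12$ comes from this reflection factor of $2$ (equivalently, from the Gr\"otzsch-type boundary configuration), not from a discrepancy between the Teichm\"uller and round-ring constants as you suggest. With that clarification your case analysis, the derivation of the ``in particular'' clause from $\varphi$-LC-1 and the doubling of $\varphi$, and the final bookkeeping are all sound.
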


Finally, we need the following key estimate.
\begin{lemma}\label{lemma:key modulus estimate}
Let $\Omega\subset\bR^2$ be a Jordan domain that has the three point property with the control function $\psi$. Let $\alpha_1$ and $\alpha_2$ be two disjoint arcs in $\bdary\Omega$ and let $\Gamma$ and $\Gamma'$ be the family of curves which join $\alpha_1$ and $\alpha_2$ in $\Omega$ and $\bR^2\backslash\closure{\Omega}$, respectively. If $\modulus(\Gamma)\leq C$, then
\begin{equation}\label{eq:diam bounds}
\min\{\diam(\alpha_1),\diam(\alpha_2)\}\leq \psi\circ\psi(d(\alpha_1,\alpha_2))
\end{equation}
and hence
\begin{equation}\label{eq:modulus bounds}
\modulus(\Gamma')\leq C_0^{-1}\log^{-1}\Big(1+\frac{\psi^{-1}\circ\psi^{-1}(\min\{\diam(\alpha_1),\diam(\alpha_2)\})}{\min\{\diam(\alpha_1),\diam(\alpha_2)\}} \Big).
\end{equation}
\end{lemma}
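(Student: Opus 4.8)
The plan is to reduce everything to the single geometric statement \eqref{eq:diam bounds}; the modulus bound \eqref{eq:modulus bounds} then follows immediately. Write $d=d(\alpha_1,\alpha_2)$ and $m=\min\{\diam(\alpha_1),\diam(\alpha_2)\}$. Every curve of $\Gamma'$ joins $\alpha_1$ to $\alpha_2$, so by monotonicity of the modulus and the upper estimate in Lemma~\ref{lemma:modululower} (applied with $E=\alpha_1$, $F=\alpha_2$; the argument is insensitive to the ambient domain, so it also bounds $\modulus(\alpha_1,\alpha_2,\bR^2)\ge\modulus(\Gamma')$) one gets $\modulus(\Gamma')\le C_0^{-1}\log^{-1}(1+d/m)$. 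Since \eqref{eq:diam bounds} is equivalent to $\psi^{-1}\circ\psi^{-1}(m)\le d$, we have $\log(1+\psi^{-1}\circ\psi^{-1}(m)/m)\le\log(1+d/m)$, and \eqref{eq:modulus bounds} drops out. So the whole content is \eqref{eq:diam bounds}. Throughout I may assume $\closure{\alpha_1}\cap\closure{\alpha_2}=\emptyset$ and that none of the arcs occurring below is a single point, the remaining cases being trivial. Let $\beta_1,\beta_2$ be the two complementary arcs of $\bdary{\Omega}$, so that $\alpha_1,\beta_1,\alpha_2,\beta_2$ occur in this cyclic order on the Jordan curve $\bdary{\Omega}$; thus $\Omega$, with these four boundary arcs, is a conformal quadrilateral (here one uses that $\Omega$ is a Jordan domain).

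The crucial step is to convert the hypothesis $\modulus(\Gamma)\le C$ — which in itself says nothing about the sizes of $\alpha_1,\alpha_2$, since the complement of $\Omega$ could be arbitrarily thin — into a geometric statement about the \emph{complementary} arcs $\beta_1,\beta_2$. For this I would invoke the reciprocity relation between the two families of curves joining opposite sides of the quadrilateral $\Omega$ (see e.g.\ \cite{lv73}): $\modulus(\Gamma)\cdot\modulus(\beta_1,\beta_2,\Omega)=1$. Hence $\modulus(\beta_1,\beta_2,\Omega)\ge 1/C$, while, enclosing $\Omega$ in a large ball and applying the upper estimate of Lemma~\ref{lemma:modululower} to $\beta_1,\beta_2$, $\modulus(\beta_1,\beta_2,\Omega)\le C_0^{-1}\log^{-1}(1+s)$ with $s=\dist(\beta_1,\beta_2)/\min\{\diam(\beta_1),\diam(\beta_2)\}$. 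Comparing the two gives $\dist(\beta_1,\beta_2)\le C_1\min\{\diam(\beta_1),\diam(\beta_2)\}$, with $C_1$ depending only on $C$ and the absolute constant $C_0$.

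It now remains to apply the three point property \eqref{eq:three point property with control function} twice. First pick $p\in\alpha_1$, $q\in\alpha_2$ with $|p-q|=d$, and apply \eqref{eq:three point property with control function} to the pair $\{p,q\}$: one component of $\bdary{\Omega}\setminus\{p,q\}$ contains all of $\beta_1$ and the other contains all of $\beta_2$, so the smaller of the two has diameter $\le\psi(Cd)$; in particular $\min\{\diam(\beta_1),\diam(\beta_2)\}\le\psi(Cd)$. Combined with the previous paragraph this yields $\dist(\beta_1,\beta_2)\le C_1\psi(Cd)$, so there are $u\in\beta_1$, $v\in\beta_2$ with $|u-v|\le C_1\psi(Cd)$. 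Apply \eqref{eq:three point property with control function} once more, now to $\{u,v\}$: one component of $\bdary{\Omega}\setminus\{u,v\}$ contains all of $\alpha_1$ and the other all of $\alpha_2$, so $\min\{\diam(\alpha_1),\diam(\alpha_2)\}\le\psi(C|u-v|)\le\psi\bigl(CC_1\,\psi(Cd)\bigr)$. Finally, the assumed doubling of $\psi$ together with Lemma~\ref{lemma:consequence of technical assumption} absorbs the constants $C$ and $CC_1$, rewriting the right-hand side as $\psi\circ\psi(d)$; this is \eqref{eq:diam bounds}.

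The main obstacle is the middle step: recognising that $\modulus(\Gamma)\le C$ should be read through the conjugate curve family, i.e.\ that the right object to control geometrically is not $\alpha_1,\alpha_2$ themselves but the pair of complementary arcs $\beta_1,\beta_2$, which the modulus hypothesis forces to be close together relative to their sizes. Once this reformulation is in place, the remainder is two essentially bookkeeping applications of the three point property. The only other delicate points are the treatment of degenerate arcs and coinciding endpoints (routine) and the absorption of the constants $C,C_1$ into $\psi$, which is legitimate because $\psi$ is doubling and, in the range of interest here, dominates the identity.
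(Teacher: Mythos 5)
Your proof is correct, but it reaches \eqref{eq:diam bounds} by a genuinely different route than the paper. The paper never passes to the conjugate family: it picks $z_1\in\alpha_1$, $z_2\in\alpha_2$ with $|z_1-z_2|=d$, applies the three point property first at $\{z_1,z_2\}$ (to trap $\beta_1$ in $\closure{B}(z_1,\psi(d))$) and then at $\{z_0,z_1\}$ for $z_0\in\beta_2$ (to push $\beta_2$ out of $B(z_1,\psi^{-1}(r/2))$ where $r=\diam\alpha_1$), and concludes that every circle $|z-z_1|=t$ with $t\in(\psi(d),\psi^{-1}(r/2))$ separates $\beta_1$ from $\beta_2$ and hence carries a curve of $\Gamma$ itself; Lemma~\ref{lemma:modulusupper} then gives $\frac{1}{2\pi}\log\frac{\psi^{-1}(r/2)}{\psi(d)}\leq\modulus(\Gamma)\leq C$, which is the desired bound. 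So the paper converts the hypothesis into geometry by exhibiting an explicit annular subfamily of $\Gamma$, whereas you convert it via the quadrilateral reciprocity $\modulus(\Gamma)\cdot\modulus(\beta_1,\beta_2,\Omega)=1$ plus the Teichm\"uller-type upper bound of Lemma~\ref{lemma:modululower}, obtaining $\dist(\beta_1,\beta_2)\lesssim\min\{\diam\beta_1,\diam\beta_2\}$ and then applying the three point property twice (once at a pair realizing $d(\alpha_1,\alpha_2)$, once at a pair nearly realizing $\dist(\beta_1,\beta_2)$). Your version is conceptually cleaner and makes the role of the conjugate family explicit, at the cost of importing reciprocity from \cite{lv73}; the paper's version is more self-contained, using only the two modulus lemmas it states. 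Two small remarks: the degenerate case $\closure{\alpha_1}\cap\closure{\alpha_2}\neq\emptyset$ is not so much ``trivial'' as vacuous (there $\modulus(\Gamma)=\infty$, so the hypothesis fails); and both your argument and the paper's actually produce \eqref{eq:diam bounds} only up to a multiplicative constant inside $\psi\circ\psi$, which is then absorbed using the doubling assumption and Lemma~\ref{lemma:consequence of technical assumption} --- a shared imprecision relative to the literal statement, harmless for the applications.
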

\begin{proof}
The idea of the proof is similar to that of the proof of Theorem 5.1 in~\cite{gkt13}. Let $\alpha_1$ and $\alpha_2$ be two disjoint arcs in $\bdary\Omega$. Choose $z_1\in \alpha_1$, $z_2\in\alpha_2$ so that
\begin{equation*}
|z_1-z_2|=d(\alpha_1,\alpha_2):=d.
\end{equation*}
Without loss of generality, we may assume that
\begin{equation*}
r:=\diam(\alpha_1)\leq \diam(\alpha_2).
\end{equation*}

Our aim is to show that $r\leq 2\psi\circ\psi(d)$. Thus we may clearly assume that $r>2\psi\circ\psi(d)$. Note that our assumption on $\psi$ implies that $r>\psi(d)$. Since $\Omega$ has the three point property with the control function $\psi$,
\begin{equation*}
\min_{i=1,2}\diam(\gamma_i)\leq \psi(d),
\end{equation*}
where $\gamma_1$, $\gamma_2$ are the components of $\bdary\Omega\backslash \{z_1,z_2\}$. Again, we may assume that
\begin{equation*}
\diam(\gamma_1)\leq \psi(d).
\end{equation*}
Let $\beta_1$, $\beta_2$ be the components of $\bdary\Omega\backslash (\alpha_1\cup\alpha_2)$, labeled so that $\beta_i\subset\gamma_i$. Then $\beta_1\subset\gamma_1\subset\closure{B}(z_1,\psi(d))$. Choose $z_0\in\beta_2$ and let $\delta_1$, $\delta_2$ denote the components of $\bdary\Omega\backslash \{z_0,z_1\}$ labeled so that $\alpha_2\subset\delta_2$. Then the fact $\Omega$ has the three point property with the control function $\psi$ implies that
\begin{equation*}
\min_{i=1,2}\diam(\delta_i)\leq \psi(|z_1-z_0|).
\end{equation*}
Choose $\omega_1,\omega_2\in\alpha_1$ so that
\begin{equation*}
r=|\omega_1-\omega_2|=\diam(\alpha_1).
\end{equation*}
Then $\omega_i\in \gamma_1\cup\delta_1$, and the fact that $\diam(\gamma_1)\leq \psi(d)<r$ implies that not both of these points can lie in $\gamma_1$. If $\omega_1\in \gamma_1$, then
\begin{align*}
\diam(\delta_1)&\geq |\omega_2-z_1|\geq |\omega_1-\omega_2|-|z_1-\omega_1|\\
&\geq r-\diam(\gamma_1)\geq r-\psi(d)\geq \frac{r}{2}.
\end{align*}
If both lie in $\delta_1$, then
\begin{equation*}
\diam(\delta_1)\geq |\omega_1-\omega_2|=r.
\end{equation*}
Thus
\begin{equation*}
\frac{r}{2}\leq \min_{i=1,2}\diam(\delta_i)\leq \psi(|z_1-z_0|).
\end{equation*}
It follows that
\begin{equation*}
\beta_2\cap B(z_1,\psi^{-1}(\frac{r}{2}))=\emptyset.
\end{equation*}
In particular, the circle $|z-z_1|=t$ separates $\beta_1$ and $\beta_2$ for $t\in (\psi(d),\psi^{-1}(\frac{r}{2}))$ and hence must contain an arc $\gamma$ joining $\alpha_1$ and $\alpha_2$ in $\Omega$. Thus Lemma~\ref{lemma:modulusupper} implies that
\begin{equation*}
\frac{1}{2\pi}\log\frac{\psi^{-1}(r/2)}{\psi(d)}\leq \modulus(\Gamma)\leq C
\end{equation*}
from which the claim follows. The desired inequality~\eqref{eq:modulus bounds} follows from Lemma~\ref{lemma:modululower} directly.

\end{proof}

\section{Extension of a conformal welding}

Before stating the main result of this section, let us describe the standard way of extending a conformal map
$f\colon \mathbb D\to\Omega,$ where $\Omega$ is a Jordan domain, to a mapping
of the entire plane.
First of all,
$f$ can be extended to a homeomorphism between $\closure{\mathbb D}$ and
$\closure{\Omega}$.
For simplicity, we denote this extended homeomorphism also by $f$.
It follows from the Riemann Mapping Theorem that there exists a
conformal mapping $g\colon \bR^2\setminus{\closure{\mathbb D}}\to\bR^2\setminus{\closure{\Omega}}$
such that the complement of the closed unit disk gets mapped to the complement
of $\overline \Omega$.
In this correspondence the boundary curve
$\Gamma=\bdary{\Omega}$ is mapped homeomorphically onto the boundary circle
$\partial \mathbb D$ and hence the composed mapping $G=g^{-1}\circ f$ is a
well-defined circle
homeomorphism, called conformal welding.
Suppose we are able to extend $G$ to the exterior of the
unit disk, with the extension
still denoted by $G$. Then the mapping $G'=g\circ G$ will be well-defined
outside the unit disk and
it coincides with $f$ on the boundary circle $\partial \mathbb D$.
Finally, if we define
\begin{equation*}
  F(x)=
  \begin{cases}
    G'(x) & \text{if } |x| \geq 1 \\
    f(x) & \text{if } |x| \leq 1 ,
  \end{cases}
\end{equation*}
then we obtain an extension of $f$ to the entire plane.
In the case of a quasidisk, that is when  $\Omega$ is linearly locally
connected (LLC), the extension $G$ can be chosen to be quasiconformal
and hence the obtained map $F$ is also quasiconformal.

On the other hand, the extendability
of a conformal mapping $f:\mathbb D\to \Omega$ to a homeomorphism $\hat f:\bR^2
\to \bR^2$ of locally integrable distortion is essentially equivalent to being
able to extend the conformal welding $G'$ above to this class. Indeed,
if $\hat f$ extends $f$, then $g^{-1}\circ \hat f$ extends $G$ to the exterior
of $\mathbb D$ and has the same distortion as $\hat f.$ Reflecting (twice)
with respect to the unit circle one then further obtains an extension
to $\mathbb D\setminus \{0\}.$ Hence, one obtains an extension $\hat G'$ of
$G'$ to $\bR^2\setminus \{0\}$ with distortion that has the same local 
integrability degree as the distortion of $\hat f.$ If the latter distortion
is sufficiently nice in a neighborhood of infinity (e.g. bounded), then this
holds in all of $\bR^2$ as well.

Given a sense-preserving homeomorphism $f\colon \partial \mathbb D\to \partial \mathbb D$
and $0< t< \frac{\pi}{2}$, set
\begin{equation}
\delta_f(\theta,t)=\max \Big\{\frac{|f(e^{i(\theta+t)})-f(e^{i\theta})|}{|f(e^{i\theta})-f(e^{i(\theta-t)})|}, \frac{|f(e^{i(\theta-t)})-f(e^{i\theta})|}{|f(e^{i\theta})-f(e^{i(\theta+t)})|}\Big\}.
\end{equation}
Clearly $\delta_f$ is continuous in both variables, $\delta_f\geq 1$ and $\delta_f(\theta+2\pi,t)=\delta_f(\theta,t)$. The scalewise distortion of $f$ is defined as $\rho_f(t)=\sup_{\theta}\delta_f(\theta,t)$.

In the following, we discuss a standard way of extending a conformal welding $G:\bdary\mathbb{D}:\to\bdary\mathbb{D}$ to a global homeomorphism of the whole plane with controlled distortion. More precisely, we want to present the following result, which is implicitly contained in~\cite[Section 2 and Section 3]{z08}.

\begin{proposition}\label{prop:results for conformal welding}
Given a conformal welding $G:\bdary\mathbb{D}:\to\bdary\mathbb{D}$, there exists a homeomorphism $\hat{G}:\bR^2\to \bR^2$ with the following property:
\begin{itemize}
\item For some $\delta\in (0,\frac{1}{2})$, $\hat{G}(z)=z$ if $|z|<\delta$ or $|z|>\frac{1}{\delta}$.
\item The distortion of $\hat{G}$ is bounded above by the scalewise distortion of $G$, \ie
\begin{equation}\label{eq:estimate of scalewise distortion}
K_{\hat{G}}(z)\leq C\rho_{G}(\log |z|)=C\sup_{\theta\in [0,2\pi]}\delta_{G}(\theta,\log |z|), 
\end{equation}
if $\delta\leq |z|\leq \frac{1}{\delta}$ for some absolute constant $C>0$.
\end{itemize}
\end{proposition}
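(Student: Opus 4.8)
The plan is to build $\hat G$ explicitly using a Beurling--Ahlfors type radial extension adapted to the exterior of the unit disk, and then estimate its pointwise distortion scale by scale. First I would pass to logarithmic coordinates: writing $z = e^{s+i\theta}$ for $s \in [0,\infty)$ identifies the exterior region $1 \le |z| \le 1/\delta$ with the half-strip $[0,\log\frac1\delta]\times[0,2\pi]$, and the circle homeomorphism $G(e^{i\theta}) = e^{i\gamma(\theta)}$ lifts to an increasing map $\gamma\colon\bR\to\bR$ with $\gamma(\theta+2\pi) = \gamma(\theta)+2\pi$. In these coordinates I would define the extension by the familiar averaging formula at ``height'' $s$: roughly,
\begin{equation*}
  \Phi(s,\theta) = \Bigl(\,a(s,\theta),\; \tfrac12\bigl(\gamma(\theta+s)+\gamma(\theta-s)\bigr)\,\Bigr),
\end{equation*}
where the radial component $a(s,\theta)$ is chosen (essentially as $\tfrac12|\gamma(\theta+s)-\gamma(\theta-s)|$, suitably normalized) so that $\Phi$ is a homeomorphism of the half-strip fixing the bottom edge $s=0$ pointwise and becoming asymptotically the identity as $s$ grows; then $\hat G$ in the annulus is $e^{\Phi\circ\log}$. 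Near $|z|=1/\delta$ one interpolates linearly in $s$ to the identity over a fixed-width collar so that $\hat G(z)=z$ for $|z|\ge 1/\delta$, and one extends by reflection (or simply by the identity composed with the conformal $g$-picture) to get $\hat G(z)=z$ for $|z|<\delta$; this realizes the first bullet.

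The second bullet is the analytic heart. The point of the logarithmic change of variables is that $\delta_G(\theta,t)$ with $t = \log|z|$ is exactly the ratio of the two one-sided increments $\gamma(\theta+t)-\gamma(\theta)$ and $\gamma(\theta)-\gamma(\theta-t)$ that enter the averaging formula at scale $t$. So I would differentiate $\Phi$ and bound $\|D\Phi\|^2/J_\Phi$ at the point $(s,\theta)$ by a universal constant times the local ``spread ratio'' of $\gamma$ over the interval $[\theta-s,\theta+s]$, which is controlled by $\sup_\theta \delta_G(\theta,s) = \rho_G(s)$. Since the map $z\mapsto e^{\Phi(\log z)}$ composes $\hat G$ with two conformal maps ($\log$ and $\exp$), which do not change the distortion function, one gets $K_{\hat G}(z) \lesssim \rho_G(\log|z|)$ on the annulus, which is precisely \eqref{eq:estimate of scalewise distortion}. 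The absolute integrability/Sobolev regularity needed to call $\hat G$ a homeomorphism of finite distortion (membership in $W^{1,1}_{\loc}$ and $J_{\hat G}>0$ a.e.) follows from the standard Beurling--Ahlfors bookkeeping, since $\gamma$ is a continuous increasing function; these are the ``routine'' parts that I would cite to \cite{z08}.

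The main obstacle is the pointwise distortion estimate itself: carrying out the differentiation of the averaging extension and showing, cleanly, that the resulting matrix has operator norm squared over Jacobian bounded by $C\rho_G(s)$ with a constant independent of $G$ and $s$. This requires care about the radial normalization $a(s,\theta)$ — one must choose it so that both the radial stretching $\partial_s a$ and the angular derivatives are simultaneously comparable to the increments of $\gamma$, and so that the off-diagonal terms ($\partial_\theta a$, the $s$-derivative of the angular component) are absorbed into the same quantity. A secondary technical point is the matching at $s = \log\frac1\delta$: the linear interpolation to the identity must be done on a collar whose width is bounded below so that it contributes only an absolute constant to the distortion there, which is fine because $\rho_G(\log\frac1\delta)\ge 1$ absorbs it. Once the estimate on the half-strip is in hand, transporting it back through $\exp$ and $\log$, and handling $|z|<\delta$ by the reflection symmetry of the welding picture described just before the proposition, completes the argument.
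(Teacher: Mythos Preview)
Your outline is essentially the paper's argument: lift $G$ to a periodic line homeomorphism $h$, apply the Beurling--Ahlfors extension to obtain $H$ on a half-plane/strip, interpolate linearly to the identity on a fixed collar, conjugate back through $\exp$ and $\log$, and reflect across $\partial\mathbb D$; the paper cites \cite{cch96} for the key pointwise bound $K_H(x+iy)\le C\rho_h(y)$, which is precisely your ``main obstacle,'' and otherwise differs only cosmetically (it extends into $\mathbb D$ first and then reflects outward, and normalizes the period to $1$ via $z\mapsto e^{2\pi iz}$ rather than $2\pi$). One caution: your ``rough'' formula $\Phi(s,\theta)=\bigl(a(s,\theta),\,\tfrac12(\gamma(\theta+s)+\gamma(\theta-s))\bigr)$ should be the \emph{integral} averages of the genuine Beurling--Ahlfors extension \eqref{eq:B-A extension}, since without the $\int_0^1\!\cdots\,dt$ smoothing the extension need not be differentiable when $\gamma$ is merely continuous, and the distortion computation you describe would not go through.
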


Let us describe below the argument leading to Proposition~\ref{prop:results for conformal welding}.
Given a conformal welding $G:\bdary\mathbb{D}\to\bdary\mathbb{D}$, we first want to extend $G$ to a homeomorphism $\tilde{G}:\closure{\mathbb{D}}\to\closure{\mathbb{D}}$. We may represent $G$ in the form
\begin{equation*}
G(e^{2\pi ix})=e^{2\pi ih(x)},
\end{equation*}
where $h:\bR\to \bR$ is a homeomorphism of the real line which commutes with the unit translation $x\mapsto x+1$. For simplicity, we may assume that $G(1)=1$ and hence $h(0)=0$.

Next, we extend our mapping $h$ to a homeomorphism $H:\mathbb{H}\to \mathbb{H}$. This can be done via the well-known Beurling-Ahlfors extension. To be more precise, for $0<y<1$, set
\begin{equation}\label{eq:B-A extension}
H(x+iy)=\frac{1}{2}\int_0^1 (h(x+ty)+h(x-ty))dt+i\int_0^1 (h(x+ty)-h(x-ty))dt.
\end{equation}
Then $H=h$ on the real axis and $H$ is a $C^1$-smooth homeomorphism of $\mathbb{H}$. Since $h(x+1)=h(x)+1$, for $y=1$
\begin{equation*}
H(x+i)=x+i+C_0,
\end{equation*}
where $C_0=\int_0^1 h(t)dt-\frac{1}{2}\in [-\frac{1}{2},\frac{1}{2}]$. For $1\leq y\leq 2$ we extend $H$ linearly by setting
\begin{equation*}
H(z)=z+(2-y)C_0, \quad z=x+iy.
\end{equation*}
Finally, we set $H(z)=z$ if $y=Im(z)\geq 2$. It is easy to check that $H(z+k)=H(z)+k$ for $k\in \mathbb{Z}$.
We set
\begin{equation}
\tilde{G}=\textbf{e}\circ H\circ L,
\end{equation}
where $\textbf{e}:z\mapsto e^{2\pi iz}$ is the lifting mapping and $L:z\mapsto \frac{\log z}{2\pi i}$ is the logarithmic mapping. We claim that $\tilde{G}:\mathbb{D}\backslash \{0\}\to \mathbb{D}\backslash \{0\}$ is a well-defined homeomorphism. To see thisu], let $z=re^{0i}=re^{2\pi i}$ be as in Figure~\ref{fig:homeomorphism}. We need to show that $L$ is well-defined on the segment $P:=\{z:r\leq |z|\leq 1\}$. Note that in Figure~\ref{fig:homeomorphism}, the vertical line $[0,L(z)]$ corresponds to the image of $P$ with argument $0$ and the vertical line $[1,L(z)+1]$ corresponds to the image of $P$ with argument $2\pi$ under the mapping $L$. Note also that $L(re^{0i})=\frac{\log r}{2\pi i}$ and $L(re^{2\pi i})=\frac{\log r+2\pi i}{2\pi i}$. Since $H$ satisfies that $H(z+1)=H(z)+1$ and $\textbf{e}$ is $1$-periodic, the mapping $\tilde{G}$ is a homeomorphism in the annulus $\mathbb{D}\backslash B(0,r)$. 
\begin{figure}[h]
  \includegraphics[width=12cm]{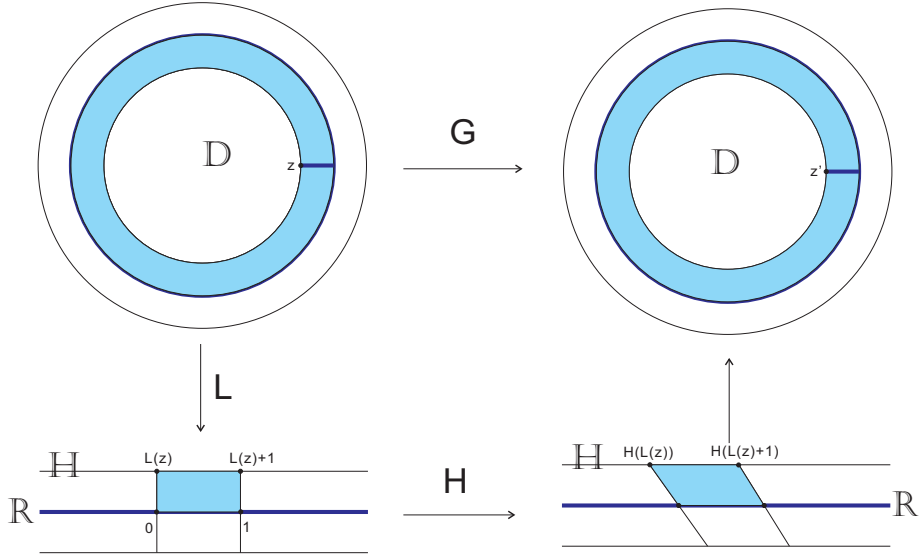}\\
  \caption{The homeomorphism $\tilde{G}$}\label{fig:homeomorphism}
\end{figure}
Moreover, $\tilde{G}=G$ on $\bdary\mathbb{D}$ and $\tilde{G}(z)=z$ for $0<|z|\leq \delta:= e^{-4\pi}$. Thus $\tilde{G}$ is well-defined homeomorphism of the unit disk if we additionally set $\tilde{G}(0)=0$. 

Finally, we may define our mapping $\hat{G}:\bR^2\to\bR^2$ by setting
\begin{equation*}
  \hat{G}(z)=
  \begin{cases}
    \tilde{G}(z) & \text{if } |z| \geq 1 \\
    R\circ\tilde{G}\circ R(z) & \text{if } |z| \leq 1 ,
  \end{cases}
\end{equation*}
where $R(z)=\frac{1}{\bar{z}}$ is the inversion with respect to the unit circle.  To complete the proof of Proposition~\ref{prop:results for conformal welding}, we need to estimate the distortion of $\hat{G}$. 

It is clear that we only need to estimate the distortion of $\tilde{G}$. Since $\textbf{e}$ and $L$ are conformal mappings, it follows that
\begin{align*}
K_{\tilde{G}}(z)=K_H(\omega),\quad z=e^{2\pi i\omega},\ \omega\in \mathbb{H}.
\end{align*}
So we reduce all distortion estimates for $\tilde{G}$ to the corresponding ones for $H$. Since $H$ is conformal for $y>2$ and linear for $y\in [1,2]$, it suffices to estimate $K_H$ in the strip $S=\bR\times [0,1]$. 
The desired estimate
\begin{equation*}
K_H(x+iy)\leq C_0\rho_h(y),\quad x+iy\in S
\end{equation*}
follows from the calculation in~\cite{cch96}, where
\begin{align*}
\rho_h(t)&=\sup_{\theta\in \bR}\delta_h(\theta,t)\\
&:=\sup_{\theta\in \bR}\max \Big\{\frac{|h(\theta+t)-h(\theta)|}{|h(\theta)-h(\theta-t)|},\frac{|h(\theta-t)-h(\theta)|}{|h(\theta)-h(\theta+t)|}\Big\}.
\end{align*}
Note that if $t\in [0,1]$, then
\begin{equation*}
\delta_{G}(\theta,t)\approx \delta_h(\theta,t)
\end{equation*}
and hence
\begin{equation*}
\rho_G(t)\approx\rho_h(t).
\end{equation*}
The proof of Proposition~\ref{prop:results for conformal welding} is complete.

As an application of Proposition~\ref{prop:results for conformal welding}, we easily obtain the following corollary. 

Let $\delta$ be as in Proposition~\ref{prop:results for conformal welding} and let  $\varepsilon<\delta$ is sufficiently small such that $$\log |z|\leq 2|r-1|$$ for $z=re^{i\theta}\in A:=\closure{B(0,1+\varepsilon)}\backslash B(0,1-\varepsilon)$. Proposition~\ref{prop:results for conformal welding} implies that 
\begin{align*}
  K_{\hat{G}}(z)\leq C\rho_G(\log |z|)\leq C\rho_G(2|r-1|)
\end{align*}
for $z\in A$. If $\rho_G(t)\leq C't^{-\alpha}$ as $t\to 0$, then
\begin{align*}
K_{\hat{G}}(z)\leq C |r-1|^{-\alpha},
\end{align*}
for $z\in A$. Integrating in polar coordinates, we immediately obtain the following corollary. 
\begin{corollary}\label{coro:sufficient for extending}
Let $G:\bdary\mathbb{D}:\to\bdary\mathbb{D}$ be a conformal welding. If 
\begin{equation*}
\rho_G(t)=O(\log\frac{1}{t}) \quad \text{as}\quad t\to 0,
\end{equation*}
then $G$ extends to a homeomorphism of the entire plane of locally exponentially integrable distortion. Moreover, if 
\begin{equation*}
\rho_G(t)=O(t^{-\alpha}) \quad \text{as}\quad t\to 0
\end{equation*}
for some $\alpha>0$,
then $G$ extends to a homeomorphism of the entire plane of locally $p$-integrable distortion with any $p\in (0,\frac{1}{\alpha})$.
\end{corollary}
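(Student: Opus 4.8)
The plan is to deduce the corollary directly from Proposition~\ref{prop:results for conformal welding} and the pointwise bound $K_{\hat G}(z)\le C\rho_G(2|r-1|)$ on the thin annulus $A$ obtained just above it. First I would fix the homeomorphism $\hat G\colon\bR^2\to\bR^2$ produced by Proposition~\ref{prop:results for conformal welding}. By construction it genuinely extends $G$: on $|z|\ge1$ we have $\hat G=\tilde G=G$ on $\partial\mathbb D$, while on $|z|\le1$ we have $\hat G=R\circ\tilde G\circ R$, which also restricts to $G$ on $\partial\mathbb D$ because $R$ fixes $\partial\mathbb D$ pointwise and $G(\partial\mathbb D)=\partial\mathbb D$. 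Moreover $\hat G(z)=z$ for $|z|<\delta$ and for $|z|>1/\delta$, so $\hat G$ is the identity near $0$ and near $\infty$. Since the Beurling--Ahlfors map $H$ of~\eqref{eq:B-A extension} is a $C^1$ homeomorphism of the open strip and the remaining ingredients $\textbf{e},L,R$ are conformal, $\hat G$ is locally Lipschitz off $\partial\mathbb D\cup\{0\}$; the membership $\hat G\in W^{1,1}_{\loc}(\bR^2;\bR^2)$ with $J_{\hat G}>0$ a.e., which makes $\hat G$ a homeomorphism of finite distortion, then follows in the cases at hand from $\|D\hat G\|\le(K_{\hat G}J_{\hat G})^{1/2}$, the local integrability of $J_{\hat G}$, and the Cauchy--Schwarz inequality once $K_{\hat G}\in L^1_{\loc}$ is known (and otherwise from the explicit estimates of~\cite{cch96}).

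Next I would split $\bR^2=(\bR^2\setminus A)\cup A$, where $A=\closure{B(0,1+\varepsilon)}\setminus B(0,1-\varepsilon)$ with $\varepsilon<\delta$ chosen, as above, so small that $A\subset\{\delta\le|z|\le1/\delta\}$ and $\log|z|\le2|r-1|$ on $A$. On $\bR^2\setminus A$ the map $\hat G$ is either the identity, or is assembled from the conformal maps $\textbf{e},L,R$ together with the explicitly linear or conformal pieces of $H$ in $\{\imp\ge1\}$ and a Beurling--Ahlfors piece evaluated at scales $y$ bounded away from $0$; since $t\mapsto\rho_h(t)$ is finite and continuous on $(0,1]$ and $\rho_G\approx\rho_h$ there, this forces $K_{\hat G}\le C_1$ on $\bR^2\setminus A$ for some absolute $C_1$, so that $\exp(\lambda K_{\hat G})$ and $K_{\hat G}^{p}$ are locally integrable there for every $\lambda,p>0$. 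On $A$ the displayed estimate reads $K_{\hat G}(z)\le C\rho_G(2|r-1|)$ for $z=re^{i\theta}\in A$.

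The two assertions now follow by integrating in polar coordinates. If $\rho_G(t)=O(\log\frac{1}{t})$ as $t\to0$, then after shrinking $\varepsilon$ we get $K_{\hat G}(z)\le C'\log\frac{1}{|r-1|}$ on $A$, hence $\exp(\lambda K_{\hat G}(z))\le|r-1|^{-\lambda C'}$ there, and
\[
\int_A\exp\bigl(\lambda K_{\hat G}(z)\bigr)\,dz\le 2\pi\int_{1-\varepsilon}^{1+\varepsilon}|r-1|^{-\lambda C'}\,r\,dr<\infty
\]
as soon as $\lambda C'<1$; taking $\lambda$ small yields $\exp(\lambda K_{\hat G})\in L^1_{\loc}(\bR^2)$, i.e.\ $\hat G$ has locally exponentially integrable distortion. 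If instead $\rho_G(t)=O(t^{-\alpha})$ as $t\to0$, then $K_{\hat G}(z)\le C'|r-1|^{-\alpha}$ on $A$, whence
\[
\int_A K_{\hat G}(z)^{p}\,dz\le 2\pi(C')^{p}\int_{1-\varepsilon}^{1+\varepsilon}|r-1|^{-\alpha p}\,r\,dr<\infty
\]
whenever $\alpha p<1$, that is for every $p\in(0,\frac{1}{\alpha})$; together with the bound on $\bR^2\setminus A$ this gives $K_{\hat G}\in L^{p}_{\loc}(\bR^2)$ for all such $p$.

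The step I expect to demand the most care, although it is soft rather than genuinely hard, is the uniform bound on $K_{\hat G}$ over $\bR^2\setminus A$: it rests on the finiteness and continuity of the scalewise distortion $\rho_h$ on $(0,1]$, the conformality of $\textbf{e},L,R$, and the explicit linear/conformal form of $H$ for $\imp\ge1$. Once this and the finite-distortion bookkeeping of the first paragraph are in place, the corollary reduces to exactly the two elementary polar-coordinate integrals above.
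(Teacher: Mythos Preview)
Your proposal is correct and follows essentially the same approach as the paper: invoke Proposition~\ref{prop:results for conformal welding}, use the pointwise bound $K_{\hat G}(z)\le C\rho_G(2|r-1|)$ on the thin annulus $A$, and integrate in polar coordinates. The paper's argument is simply a terser version of yours; the extra bookkeeping you supply (why $\hat G$ genuinely extends $G$, why $K_{\hat G}$ is bounded on $\bR^2\setminus A$, and the $W^{1,1}_{\loc}$ membership) are details the paper leaves implicit.
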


\section{Main proofs}
Theorem~\ref{thm:thma} follows from the following more general result.
\begin{theorem}\label{thm: general thmc}
If $\Omega\subset\bR^2$ is a Jordan domain that has the three point property with a control function $\psi$ such that
\begin{equation}\label{eq:sufficient condition}
    \limsup_{r\to 0}\frac{r}{\psi^{-1}\circ\psi^{-1}(r)\log\frac{1}{r}}\leq C'\\
\end{equation}
for some constant $C'$, then $\Omega$ is a generalized quasidisk.
\end{theorem}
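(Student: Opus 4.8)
The plan is to reduce everything to Corollary~\ref{coro:sufficient for extending} by showing that the scalewise distortion $\rho_G$ of the conformal welding $G = g^{-1}\circ f$ satisfies $\rho_G(t) = O(\log\frac{1}{t})$ as $t\to 0$. Once this is established, the corollary produces a homeomorphism of the entire plane of locally exponentially integrable distortion extending $G$, and then the discussion at the start of Section~4 (reflecting with respect to the unit circle and composing with the conformal map $g$) converts this into an extension of $f$ itself with the same local integrability degree of distortion, \ie $\Omega$ is a generalized quasidisk.

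So the heart of the matter is a quantitative estimate on $\rho_G$. Fix $\theta$ and a small $t>0$, and consider the three consecutive boundary points on $\partial\mathbb D$, namely $e^{i(\theta-t)}$, $e^{i\theta}$, $e^{i(\theta+t)}$. Under $f$ these map to three points on $\bdary\Omega$ dividing it into (essentially) two arcs; under $g^{-1}\circ f = G$ we want to compare the two chordal distances $|G(e^{i(\theta+t)})-G(e^{i\theta})|$ and $|G(e^{i\theta})-G(e^{i(\theta-t)})|$. I would control each of these by sandwiching it between a power of the diameter of the corresponding boundary arc of $\bdary\Omega$ (via Lemma~\ref{lemma:modulus of continuity} applied to $g^{-1}$ near $\bdary\Omega$, using that $\Omega$ is a Jordan domain and locally connected, together with the conformality of $g$ in the exterior) and, from below, by a modulus argument. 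The key tool is Lemma~\ref{lemma:key modulus estimate}: the curve family joining the two relevant subarcs of $\bdary\Omega$ inside $\mathbb D$ has modulus comparable to $\log\frac1t$ (two points at angular distance $t$ on the circle are separated by a family of concentric circular arcs), so the lemma forces the diameter of the smaller arc to be at most $\psi\circ\psi$ of the distance between the arcs, and then~\eqref{eq:modulus bounds} controls the modulus of the exterior family by $C_0^{-1}\log^{-1}(1+\psi^{-1}\circ\psi^{-1}(\cdot)/(\cdot))$. Feeding in the hypothesis~\eqref{eq:sufficient condition}, which says exactly that $\psi^{-1}\circ\psi^{-1}(r)\log\frac1r \gtrsim r$ for small $r$, this modulus bound is bounded below by a constant times $1/\log\frac1t$; combined with the conformal invariance of modulus under $g$ and the collar estimate for the annulus in the exterior, this translates into $\rho_G(t)\lesssim \log\frac1t$.

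I would carry this out in the following order: (1) record that it suffices to prove $\rho_G(t)=O(\log\frac1t)$, citing the reduction of Section~4 and Corollary~\ref{coro:sufficient for extending}; (2) for fixed $\theta,t$ set up the three boundary points and the two arcs $\alpha_1,\alpha_2\subset\bdary\Omega$ they determine, and note (via Lemma~\ref{lemma:equivalence of three point property and LLC}, so that $\Omega$ is $\psi^{-1}$-LC and in particular $\varphi$-LC-1) that the chordal distances on $\partial\mathbb D$ after applying $G$ are, up to the Hölder modulus of continuity of $g^{-1}$, comparable to the diameters of $\alpha_1,\alpha_2$; (3) apply Lemma~\ref{lemma:modulusupper} to see $\modulus(\Gamma)\gtrsim \log\frac1t$ where $\Gamma$ joins $\alpha_1,\alpha_2$ in $\mathbb D$ — wait, this is an upper bound on $1/\log$, so more carefully: the family joining the two arcs of $\partial\mathbb D$ cut off by $e^{i(\theta\pm t)}$ has modulus $\approx 1/\log\frac1t$, hence so does $\Gamma$ by conformal invariance, so Lemma~\ref{lemma:key modulus estimate} applies with $C\approx 1/\log\frac1t$... actually the cleaner route is to apply Lemma~\ref{lemma:key modulus estimate} directly with the constant $C=\modulus(\Gamma)$ and extract~\eqref{eq:diam bounds}–\eqref{eq:modulus bounds}; (4) use~\eqref{eq:sufficient condition} to lower-bound the right-hand side of~\eqref{eq:modulus bounds} by $c/\log\frac1t$, and transfer through $g$ and the exterior collar to bound the ratio of the two $G$-distances; (5) conclude $\rho_G(t)=O(\log\frac1t)$ and invoke the corollary. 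The main obstacle I expect is step (2)/(4): carefully tracking how the ratio in the definition of $\delta_G(\theta,t)$ is controlled simultaneously from the interior (via $f$ and the LC-1 property) and from the exterior (via the conformal $g$, where one has no a priori LC control on $\bR^2\setminus\closure\Omega$, so one must use~\eqref{eq:modulus bounds} together with the duality/LC-1 for the complement supplied by Lemma~\ref{lemma:equivalence of three point property and LLC}), and making sure the doubling hypotheses on $\psi$ are exactly what is needed to absorb the repeated compositions $\psi\circ\psi$ and $\psi^{-1}\circ\psi^{-1}$ without degrading the $\log\frac1t$ growth.
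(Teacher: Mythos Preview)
Your overall strategy is right --- reduce to showing $\rho_G(t)=O(\log\tfrac1t)$ and invoke Corollary~\ref{coro:sufficient for extending} --- and you have correctly identified Lemma~\ref{lemma:key modulus estimate} as the engine. But the configuration you set up in step~(3) does not work, and the missing idea is precisely what makes the paper's argument go through.

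You take $\alpha_1,\alpha_2$ to be ``the two arcs of $\partial\mathbb D$ cut off by $e^{i(\theta\pm t)}$'' (or, in another reading, the two arcs adjacent to $e^{i\theta}$). In either interpretation the arcs share an endpoint, so $\dist(\alpha_1,\alpha_2)=0$ and the modulus of the joining family is infinite, not $\approx 1/\log\tfrac1t$. Lemma~\ref{lemma:key modulus estimate} then gives nothing. The paper instead introduces a \emph{fourth} point, the antipode $P=e^{i(\theta+\pi)}$, and takes $\alpha_1=\gamma(e^{i\theta},e^{i(\theta+t)})$ and $\alpha_2=\gamma(P,e^{i(\theta-t)})$ (the long arc). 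Now $\diam\alpha_1\approx t$, $\diam\alpha_2\approx 2$, $\dist(\alpha_1,\alpha_2)\approx t$, so by Lemma~\ref{lemma:modululower} the interior modulus is bounded by an \emph{absolute constant} $C_1$ independent of $t$. This is what feeds into Lemma~\ref{lemma:key modulus estimate} and produces~\eqref{eq:modulus bounds} with a fixed $C$.

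A second point: you try in step~(2) to compare the $G$-chord lengths to $\diam\alpha_i$ via H\"older continuity of $g^{-1}$. The paper avoids this entirely: it pulls the exterior family $\Gamma'$ back by $g^{-1}$ to $\bR^2\setminus\overline{\mathbb D}$ and applies Lemma~\ref{lemma:modululower} there, obtaining $\log^{-1}(1+1/\delta_G(\theta,t))\le C_0^{-1}\modulus(\Gamma')$ directly by conformal invariance. Combined with~\eqref{eq:modulus bounds} and $\log(1+s)\approx s$ for small $s$, this gives $\delta_G(\theta,t)\lesssim \diam\alpha_1/\psi^{-1}\circ\psi^{-1}(\diam\alpha_1)$. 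Only at the very end does one use Lemma~\ref{lemma:modulus of continuity} --- applied to $f^{-1}$, not $g^{-1}$ --- to get the lower bound $\diam\alpha_1\gtrsim t^2$; since $r\mapsto r/\psi^{-1}\circ\psi^{-1}(r)$ is non-increasing, this yields $\delta_G(\theta,t)\lesssim t^2/\psi^{-1}\circ\psi^{-1}(t^2)$, and then~\eqref{eq:sufficient condition} with $r=t^2$ gives exactly $O(\log\tfrac1t)$.
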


\begin{proof}[Proof of Theorem~\ref{thm: general thmc}]
The idea is similar to that used in~\cite[Theorem 5.1]{gkt13}. Since $\Omega$ is a Jordan domain, $f$ extends to a homeomorphism between $\mathbb D$
and $\closure{\Omega}$ and we denote also this extension by $f$.
Let $e^{i(\theta-t)},\ e^{i\theta}$ and $e^{i(\theta+t)}$ be three points on $S$.
Since $f$ is a sense-preserving homeomorphism, $f(e^{i(\theta-t)}),\ f(e^{i\theta})$ and
$f(e^{i(\theta+t)})$ will be on the boundary of $\Omega$ in order.
Let $g\colon \bR^2\setminus {\closure {\mathbb D}}\to\bR^2\setminus{\closure{\Omega}}$
be a conformal mapping from the Riemann Mapping Theorem. Then $g$ extends to a homemorphism
between $\bR^2\setminus{\mathbb D}$ and $\bR^2\setminus{\Omega}$.
As before, we still denote this extension by $g$. Based on the discussion in the previous section, we only need to estimate the scale-wise distortion of the conformal welding $G:=g^{-1}\circ f$. 

Let $P=e^{i(\theta+\pi)}$ be the anti-polar point of $e^{i\theta}$ on $\bdary\mathbb{D}$ and let $\gamma_f(P,\theta-t)$ denote the arc from $f(P)$ to $f(e^{i(\theta-t)})$ on $\bdary\Omega$ . There exists a $t_0$ small enough such that $\diam(\gamma_f(-1,\theta-t))\geq \diam(\gamma_f(\theta,\theta+t))$ when $t\in [0,t_0)$. Let $\Gamma_1$ be the family of curves in $\mathbb{D}$ joining $\gamma(P,e^{i(\theta-t)})$ and $\gamma(e^{i\theta},e^{i(\theta+t)})$. Then Lemma~\ref{lemma:modululower} implies that
\begin{equation}
\modulus(\Gamma_1)\leq C_1
\end{equation}
for some absolute constant $C_1>0$. The conformal invariance of modulus gives us that
\begin{equation}
\modulus(\Gamma):=\modulus(f(\Gamma_1))\leq C_2.
\end{equation}
Thus, we may use Lemma~\ref{lemma:key modulus estimate} for $\alpha_1=\gamma_f(\theta,\theta+t)$ and $\alpha_2=\gamma_f(P,\theta-t)$ and conclude that
\begin{equation*}
\diam(\gamma_f(\theta,\theta+t))\leq \psi\circ\psi(d),
\end{equation*}
where $d=d(\alpha_1,\alpha_2)$ is the distance between these two arcs. Moreover,
\begin{equation}\label{eq:modulus estimate for outer family}
\modulus(\Gamma')\leq C\log^{-1}\Big(1+\frac{\psi^{-1}\circ\psi^{-1}(\diam(\alpha_1))}{\diam(\alpha_1)} \Big),
\end{equation}
where $\Gamma'$ is the family of curves joining $\alpha_1$ and $\alpha_2$ in $\bR^2\backslash\closure{\Omega}$. Again by conformal invariance of modulus, we obtain that
\begin{equation}\label{eq:modulus from conformal invar}
\log^{-1}(1+\frac{1}{\delta_{G}(\theta,t)})\leq C_0^{-1}\modulus(\Gamma'),
\end{equation}
where $C_0$ is the constant from Lemma~\ref{lemma:modululower}. Note that 
\begin{equation*}
\frac{1}{\log(1+t)}\approx \frac{1}{t}\quad\text{as}\quad t\to 0.
\end{equation*}
Combining~\eqref{eq:modulus estimate for outer family} with~\eqref{eq:modulus from conformal invar} gives us the estimate
\begin{equation*}
\delta_{G}(\theta,t)\leq \frac{C\diam(\alpha_1)}{\psi^{-1}\circ\psi^{-1}(\diam(\alpha_1))} .
\end{equation*}
Since $\frac{t}{\psi^{-1}\circ\psi^{-1}(t)}$ is non-increasing, Lemma~\ref{lemma:modulus of continuity} implies that
\begin{equation}\label{eq:important estimate for scalewise distortion}
\delta_{G}(\theta,t)\leq \frac{Ct^2}{\psi^{-1}\circ\psi^{-1}(t^2)} .
\end{equation}
Theorem~\ref{thm: general thmc} follows immediately from~\eqref{eq:sufficient condition},~\eqref{eq:important estimate for scalewise distortion} and Corollary~\ref{coro:sufficient for extending}.

\end{proof}

\begin{proof}[Proof of Theorem~\ref{thm:thmb}]
This is basically contained in the proof of Theorem~\ref{thm: general thmc}. In this case, the desired bound~\eqref{eq:important estimate for scalewise distortion} reads as follows:
\begin{equation*}
\delta_{G}(\theta,t)\leq Ct^{2(1-\frac{1}{s^2})}.
\end{equation*}
The claim follows directly from Corollary~\ref{coro:sufficient for extending} with $\alpha=2(\frac{1}{s^2}-1)$.
\end{proof}

\begin{proof}[Proof of Theorem~\ref{thm:thmc}]
If $\Omega$ is LLC-1, then Lemma~\ref{lemma:modulus of continuity} implies that $f^{-1}$ is uniformly H\"older continuous. On the other hand, the duality result implies that $\bR^2\backslash\closure{\Omega}$ is LLC-2, which is further equivalent to $\bR^2\backslash\closure{\Omega}$ being John by the results in~\cite{nv91}. Then by the results in~\cite{kot01}, $g$ is also H\"older continuous. Hence $G^{-1}$ is uniformly H\"older continuous with some exponent $\alpha$. Therefore, for $t$ sufficiently small, we have
\begin{align*}
\delta_G(\theta,t)&\leq \max\Big\{\frac{|G(e^{i(\theta+t)})-G(e^{i\theta})|}{|G(e^{i\theta})-G(e^{i(\theta-t)})|}, \frac{|G(e^{i\theta})-G(e^{i(\theta-t)})|}{|G(e^{i(\theta+t)})-G(e^{i\theta})|}\Big\}\\
&\lesssim t^{-1/\alpha}.
\end{align*}
The claim follows from Corollary~\ref{coro:sufficient for extending}.
\end{proof}

\section{Concluding remarks}
\subsection{Definition of generalized quasidisks}

 This was previously discussed briefly in the introduction. Recall that $\Omega\subset\bR^2$ is a generalized quasidisk if it is
the image of the unit disk $\mathbb D$ under a homeomorphism $f\colon\bR^2\to\bR^2$ of the entire plane with locally exponentially integrable distortion and $f$ is conformal in the unit disk $\mathbb{D}$. However, this is not natural from the technical point of view since our extended mapping $\hat{f}$ is the identity outside a compact disk.

On the other hand, from the point view of conformal welding, requiring that $f$ is identity at infinity is reasonable since it makes the two extension problems equivalent as discussed in Section 4.

From the point view of finding a geometric characterization of generalized quasidisks, this additional requirement is also natural. More precisely, the geometry of a generalized quasidisk $\Omega\subset\bR^n$ should be determined by the geometry of its boundary (at least this is the case if $\Omega$ is a quasidisk). Intuitively the geometry of $\bdary\Omega$ should have nothing to do with the behavior of the global homeomorphism $f$ at infinity.

These observations suggest that it is better to require the global homeomorphism $f$ to be identity at $\infty$ in the definition of a generalized quasidisk.

\subsection{Inward pointing and outward pointing cusps}
As we already observed, (polynomial) interior cusps are more dangerous than (polynomial) exterior cusps for our extension problems. 
This is not a big surprise from the technical point of view since our aim is to estimate the scalewise distortion of our conformal welding $G$. It is fairly easy to observe that this is closely related to the modulus of continuity of $G^{-1}$. On the other hand, combining the duality results in~\cite{gk12} with the global H\"older continuity estimates of conformal mappings in~\cite{g13,kot01}, one can immediately see how the role of $\Omega$ being $\varphi$-LC-1 or $\psi$-LC-2 is related the modulus of continuity of $G$ and $G^{-1}$. In fact, this is exactly the way we proved~\cite[Theorem 4.4]{gkt13}.

\subsection{Open problems} 
To end the article, we put forward some open problems, which are plausible to be true.
\begin{problem}
In Theorem~\ref{thm:thma}, can we further relax the control function $\psi$ to be of the form $\psi(t)=Ct\log\frac{1}{t}$ ? By the result in~\cite{gkt13}, we know that the result fails for $\psi(t)=Ct\log^{1+\delta}\frac{1}{t}$ for any $\delta>0$.
\end{problem}

\begin{problem}
In Theorem~\ref{thm:thmc}, can we conclude that the extension has better integrability for the distortion, say locally exponentially integrable distortion, if we additionally assume that $\Omega$ is $\psi$-LC-2 for $\psi(t)=t^{s}$ with $s>1$?
\end{problem}

\begin{problem}
If we require reasonable good moduli of continuity for both $f$, $g$ and their inverses, say both $f$ and $g$ are bi-H\"older continuous up to boundary, can we conclude that $\Omega$ is a generalized quasidisk ? 
\end{problem}

\textbf{Acknowledgements}

I wish to thank my supervisor Academy Professor Pekka Koskela for many useful suggestions.

\end{document}